\newcommand {\emptycomment}[1]{}
\numberwithin{equation}{section}
\newtheorem{theorem}{Theorem}[section]
\newtheorem{proposition}[theorem]{Proposition}
\newtheorem{lemma}[theorem]{Lemma}
\theoremstyle{definition}
\newtheorem{remark}[theorem]{Remark}
\newtheorem{definition}[theorem]{Definition}
\numberwithin{equation}{section}
\begin{document}
		\title[]{$\omega$-Lie bialgebras and $\omega$-Yang-Baxter equation}
	
	\author{Yining Sun}
	\address{Y. Sun: School of
		Mathematics and Statistics, Northeast Normal
		University, Changchun 130024, China}
	\email{yiningsun@nenu.edu.cn}
	
	\author{Zeyu Hao}
	\address{Z. Hao: School of
		Mathematics and Statistics, Northeast Normal
		University, Changchun 130024, China}
	\email{haozeyu@nenu.edu.cn}
	
	\author{Ziyi Zhang}
	\address{Z. Zhang: School of
		Mathematics and Statistics, Northeast Normal
		University, Changchun 130024, China}
	\email{zhangziyi@nenu.edu.cn}
	
	\author{Liangyun Chen$^{*}$}
	\address{L. Chen: School of
		Mathematics and Statistics, Northeast Normal
		University, Changchun 130024, China}
	\email{chenly640@nenu.edu.cn}
	
	\begin{abstract}
		In this paper, we introduce the definition of multiplicative $\omega$-Lie bialgebra, which is equivalent to the Manin triples and matched pairs. We also study the $\omega$-Yang-Baxter equation and Yang-Baxter $\omega$-Lie bialgebra. The skew-symmetric solutions of the $\omega$-Yang-Baxter equation can be used to construct Yang-Baxter $\omega$-Lie bialgebra. We further introduce the concept of the $\omega$-$\mathcal{O}$-operator, which can be constructed from a left-symmetric algebras, and based on the $\omega$-$\mathcal{O}$-operator, we construct skew-symmetric solutions to the $\omega$-Yang--Baxter equation.

	   \end{abstract}

	\thanks{*Corresponding author.}
	\thanks{\emph{MSC}(2020).  16T10, 16T25, 17A30, 17B38.}
	\thanks{\emph{Key words and phrases}. $\omega$-lie algebra; bialgebra; Manin triple; Yang-Baxter equation; Rota-Baxter operator.}
	\thanks{This work is supported by NNSF of
		China (Nos. 12271085, 12071405),  NSF of Jilin Province (No. YDZJ202201ZYTS589) and the Fundamental Research Funds for the Central Universities.}

	\maketitle

    \tableofcontents

    \allowdisplaybreaks
    \section{Introduction}
    In 2007, Nurowski \cite{cite23} introduced $\omega$-Lie algebras in the study of geometric problems. An $\omega$-Lie algebra satisfies a Jacobi identity twisted by a bilinear form $\omega$ , known as the $\omega$-Jacobi identity. Intuitively, $\omega$-Lie algebras can be viewed as a natural generalization of Lie algebras. In 2010, Zusmanovich \cite{cite34} provided significant work on the structure and representations of $\omega$-Lie algebras, playing a foundational role in the study of $\omega$-Lie algebras. In recent years, $\omega$-Lie (super) algebras have received much attention and have been  studied. This research includes classification theory \cite{cite13,cite15,cite10,cite31,cite32}, such as the classification of low-dimensional $\omega$-Lie (super) algebras and simple $\omega$-Lie algebras, as well as derivation theory \cite{cite14,cite16,cite24,cite25,cite32,cite33}, encompassing derivations, local derivations, 2-local derivations, biderivations, and generalized derivations. Moreover, the representation theory of $\omega$-Lie (super) algebras has also been investigated \cite{cite30,cite32}. Beyond $\omega$-Lie (super) algebras, other algebraic structures of this type have also been studied, such as $\omega$-left-symmetric algebras \cite{cite11,cite12}, which are $\omega$-generalizations of left-symmetric algebras. In recent years, the low-dimensional classification and related properties of $\omega$-left-symmetric algebras have been studied.
		
    A bialgebra is an algebraic structure equipped with both a multiplication and a comultiplication, satisfying certain compatibility conditions. Two famous  examples of bialgebras are associative bialgebras and Lie bialgebras. The significance of associative bialgebras is shows up in representation theory: the group algebras and the enveloping algebras of a Lie algebras both naturally carry the structure of an associative bialgebra, and this structural property serves as the algebraic foundation for the phenomenon that ``the tensor product of representations remains a representation" \cite{cite21,cite35}. This shows that associative bialgebras are not only natural algebraic structures but also a  bridge connecting algebraic structures with representation theory.  Lie bialgebras  can be viewed as a linearization of Poisson-Lie groups \cite{cite9,cite17}, having  deep backgrounds in  geometry and mathematical physics. Moreover, Lie
    bialgebras are closely related to the classical Yang-Baxter equation: a skew-symmetric solutions of the classical Yang-Baxter equation gives rise to a natural Lie algebra structure to the dual space of a Lie algebra, and the original Lie algebra becomes a Lie bialgebra. This connection not only reveals the construction of Lie bialgebras, but also shows why they are important in integrable systems and quantum group theory. In recent years, the bialgebra theories of various algebras have been extensive development. However, despite the profound geometric background and rich structural properties of $\omega$-Lie algebras, the bialgebra theory for $\omega$-Lie algebras has not yet been established. \textbf{The goal of this paper is to establish the bialgebra theory for $\omega$-Lie algebras.}
        
    A central problem in bialgebra theory is how to reasonably define a bialgebra structure for an algebra. Currently, most bialgebra theories for various types of algebras are based on analogues of Manin triples and matched pairs. The key to defining the compatibility conditions of a bialgebra structure via Manin triples and matched pairs lies in the adjoint and dual representations of the algebra itself. Not all algebras naturally admit adjoint or dual representations, especially the dual representation. In particular, the treatment of dual representations frequently leads to different bialgebra theories, such as those for Leibniz algebras \cite{cite1,cite2,cite28}, Hom-Lie algebras \cite{cite5,cite6,cite7,cite29}, and nearly associative algebras \cite{cite3,cite18}, each of these has more than one bialgebra theory. It seems that all algebras have adjoint representations, but $\omega$-Lie algebras are an exception. Their adjoint and dual representations have not been considered in the existing literature. For the adjoint representation, the standard construction does not give a representation. 
	
	In this paper, for an important class of $\omega$-Lie algebras, namely, the multiplicative $\omega$-Lie algebras, we provide their dual representation. To resolve the issue of the missing adjoint representation for $\omega$-Lie algebras, our strategy is to consider a more general algebraic structures, called generalized multiplicative $\omega$-Lie algebras, which extend multiplicative $\omega$-Lie algebras. We seek representations of generalized multiplicative $\omega$-Lie algebras and use them as generalized representations of multiplicative $\omega$-Lie algebras. This gives an analogue of the adjoint representation. Based on the above, we present a definition of multiplicative $\omega$-Lie bialgebra via Manin triples and matched pairs.
	
	The classical Yang-Baxter equation first appeared in the context of inverse scattering theory \cite{cite19}, which has played a prominent role in many fields, including integrable systems, quantum groups and quantum field theory, etc. \cite{cite8}. Semenov-Tian-Shansky \cite{cite27} discovered that, on quadratic Lie algebras, the classical Yang-Baxter equation in tensor form can be transformed into an operator form, namely $R: L \to L$, where $[R(x), R(y)] = R([R(x), y] + [x, R(y)])$, which is essentially an $\mathcal{O}$-operator corresponding to the adjoint representation. Subsequently, Kupershmidt \cite{cite22} found that the classical Yang-Baxter equation in tensor form on general Lie algebras can also be converted into an operator form, namely $R: L^* \to L$, where $[R(x), R(y)] = R(\mathrm{ad}^* R(x)(y) - \mathrm{ad}^* R(y)(x))$, which is essentially an $\mathcal{O}$-operator corresponding to the dual representation of the adjoint representation. Kupershmidt identified this phenomenon and proposed a general $\mathcal{O}$-operator corresponding to a representation $\rho$. A natural question arises: what is the relationship between the $\mathcal{O}$-operator corresponding to the representation $\rho$ and the Yang-Baxter equation? Bai \cite{cite4} addressed this question, discovering that the $\mathcal{O}$-operator corresponding to the representation $\rho$ is the operator form of the classical Yang-Baxter equation on a larger Lie algebra. Additionally, he investigated the relationship between left-symmetric algebras and $\mathcal{O}$-operators.
	
	For Lie algebras, it is standard that a Lie bialgebra, equivalent to the Manin triples and matched pairs of a Lie algebra, can be constructed starting from the skew-symmetric solutions of the classical Yang-Baxter equation. However, for $\omega$-Lie algebras, the skew-symmetric solutions of the $\omega$-Yang--Baxter equation do not construct a multiplicative $\omega$-Lie bialgebra. Furthermore, for Lie algebras, left-symmetric algebras can be used to construct the $\mathcal{O}$-operator corresponding to a representation of the Lie algebras. But in the case of $\omega$-Lie algebras, starting from $\omega$-left-symmetric algebras does not lead to the construction of the $\omega$-$\mathcal{O}$-operator.	
	
	In fact, we find that for $\omega$-Lie algebras, using skew-symmetric solutions of the $\omega$-Yang-Baxter equation gives a Yang-Baxter $\omega$-Lie bialgebra. The Yang-Baxter $\omega$-Lie bialgebra is not consistent with the multiplicative $\omega$-Lie bialgebra. However, when the multiplicative $\omega$-Lie algebra degenerates into a Lie algebra, both the Yang--Baxter $\omega$-Lie bialgebra and the multiplicative $\omega$-Lie bialgebra reduce to a Lie bialgebra. Furthermore, we discover that starting from $\omega$-left-symmetric algebras, we can construct an $\omega$-$\mathcal{O}$-operator corresponding to a Generalized Representation I. Additionally, we find that starting from left-symmetric algebras, we can construct an $\omega$-$\mathcal{O}$-operator.
	
	To make the structure of this work clear, we show the main parts of the paper and how they are connected in the figure below.
	\begin{figure}[htbp]
		\centering
		\makebox[\textwidth]{
			\begin{tikzpicture}[
				node distance=0.6cm and 0.4cm, 
				align=center,
				every node/.style={font=\small, inner sep=6pt},
				>=stealth 
				]
				
				\node (lsa) {$\omega$-left-symmetric \\ algebras};
				\node[right=of lsa] (op) {$\omega$-$\mathcal{O}$-operator \\ on $\omega$-Lie algebras};
				\node[right=of op] (solutions) {solutions \\ of $\omega$--YBE};
				\node[right=of solutions] (ybe) {Yang--Baxter \\ $\omega$-Lie bialgebra};
				\node[right=0.6cm of ybe] (multiplicative) {Multiplicative \\ $\omega$-Lie bialgebras};
				
				\node[above=of lsa, xshift=-0.5cm] (cls) {left-symmetric algebras};
				
				\node[below=of multiplicative] (manin) {Manin triples of \\ $\omega$--Lie algebras};
				\node[above=of multiplicative] (matched) {matched pairs of \\ $\omega$--Lie algebras};
				
				
				\draw[->, thick, shorten >=10pt, shorten <=10pt] (cls) -- (op);
				
				\draw[->, thick, dashed] ([yshift=3pt]lsa.east) -- ([yshift=3pt]op.west);
				\draw[->, thick] ([yshift=-3pt]op.west) -- ([yshift=-3pt]lsa.east);
				
				\draw[->, thick] ([yshift=3pt]op.east) -- ([yshift=3pt]solutions.west);
				\draw[->, thick] ([yshift=-3pt]solutions.west) -- ([yshift=-3pt]op.east);
				
				\draw[->, thick] (solutions) -- (ybe);
				
				\draw[<->, dashed, thick] (ybe) -- (multiplicative);
				
				\draw[<->, thick] (multiplicative) -- (matched);
				\draw[<->, thick] (multiplicative) -- (manin);
				
			\end{tikzpicture}
		}
	\end{figure}
	
	The paper is organized as follows. In Section \ref{2}, we give some basic concepts and properties of $\omega$-Lie algebras, and particularly introduce the generalized multiplicative $\omega$-Lie algebras and their representations. In Section \ref{3}, we discuss the invariant bilinear form, Manin triples, and matched pairs for the multiplicative $\omega$-Lie algebra, and give the definition of a multiplicative $\omega$-Lie bialgebra. We prove the equivalence between the multiplicative $\omega$-Lie bialgebras, Manin triples, and matched pairs (Theorem \ref{38}). 
	In Section \ref{4}, we define the $\omega$-Yang-Baxter equation and the Yang-Baxter $\omega$-Lie bialgebra, constructing the Yang-Baxter $\omega$-Lie bialgebra using the skew-symmetric solutions of the $\omega$-Yang-Baxter equation (Theorem \ref{143}). Finally, in Section \ref{5}, we define the $\omega$-$\mathcal{O}$-operator and study its relationships with $\omega$-left-symmetric algebras (Theorem \ref{57}, Proposition \ref{510}) and left-symmetric algebras (Proposition \ref{515}). Furthermore, we use the $\omega$-$\mathcal{O}$-operator to construct a skew-symmetric solution to the $\omega$-Yang--Baxter equation in the semidirect product $\omega$-Lie algebra (Theorem \ref{518}).

	\section{$\omega$-Lie algebras and their representations}\label{2}
	In this section, we first review basic definitions of $\omega$-Lie algebras and their representations. Then, we introduce the dual representation and analogous adjoint representation of $\omega$-Lie algebras.
	\begin{definition}\cite{cite23}
        An anticommutative algebra \( L \) with multiplication \( [\cdot, \cdot] \) over a field \(\mathrm{K}\) is called an  $\omega$-Lie algebra if there exists a bilinear form \( \omega : L \times L \to\mathrm{K}\) such that
        \[
       [[x, y], z] + [[y, z], x] + [[z, x], y] = \omega(x, y)z + \omega(y, z)x + \omega(z, x)y
        \]
        for any \( x, y, z \in L \). We will refer to this identity as the \(\omega\)-Jacobi identity. 
   \end{definition}
   Clearly, an  $\omega$-Lie algebra is a natural generalization of a Lie algebra, reducing to a Lie algebra when \(\omega = 0\).
   \begin{definition}\cite{cite34}
		Let $(L, [\cdot, \cdot], \omega)$ be an $\omega$-Lie algebra over $\mathrm{K}$ and $V$ a vector space over $\mathrm{K}$. Then a representation of \( L \) on \( V \) is a linear map \( \rho : L \to \mathrm{gl}(V) \) such that
		\[
		\rho([x, y]) = \rho(x)\rho(y) - \rho(y)\rho(x) + \omega(x, y)\text{id}_V.
		\]
   \end{definition}
	By the definition of the representation of an $\omega$-Lie algebra, we can see that the usual adjoint map does not give a representation of an $\omega$-Lie algebra. In fact, the adjoint and dual representations of $\omega$-Lie algebras have not been considered or constructed in the existing literature. However, for a class of important $\omega$-Lie algebras, called multiplicative $\omega$-Lie algebras, we can give their dual representation.
	
	\begin{definition}\cite{cite34}
    	An $\omega$-Lie algebra is called multiplicative if there exists a linear form \( r : L \to \mathrm{K} \) such that
	    \[
    	[[x, y], z] + [[y, z], x] + [[z, x], y] = r([x, y])z + r([y, z])x + r([z, x])y
    	\]
	    for any \( x, y, z \in L \).
	\end{definition}
	It is straightforward to obtain the following conclusion.
    \begin{proposition}\label{21}
     	Let $(L, [\cdot, \cdot], r)$ be a multiplicative $\omega$-Lie algebra, and let \( (\rho, V) \) be a representation of \( L \). Define \( \rho^* : L \to \mathrm{gl}(V^*) \) by
    	$\rho^*(x)(\xi)(v) = -\xi(\rho(x)(v)) + 2r(x)\xi(v)$, for all \( x \in L \), \( \xi \in V^* \), and \( v \in V \). Then \( (\rho^*, V^*) \) is also a representation of \( L \).
   \end{proposition}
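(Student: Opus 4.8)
The plan is a direct check of the defining identity for a representation: I must show that
\[
\rho^*([x,y]) = \rho^*(x)\rho^*(y) - \rho^*(y)\rho^*(x) + \omega(x,y)\,\mathrm{id}_{V^*}
\qquad (x,y\in L),
\]
where $\omega$ is the bilinear form making $L$ an $\omega$-Lie algebra. Since $(L,[\cdot,\cdot],r)$ is multiplicative, comparing the $\omega$-Jacobi identity with the multiplicative identity reads off $\omega(x,y)=r([x,y])$; this is the only input from the multiplicative hypothesis. Linearity of $\rho^*$ in $x$ is immediate from the formula, so the whole proof reduces to the displayed identity.

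I would fix $\xi\in V^*$ and $v\in V$ and expand both sides as scalars. On the left, unwinding the definition of $\rho^*$ gives $-\xi\!\left(\rho([x,y])v\right)+2r([x,y])\,\xi(v)$; applying the representation axiom for $\rho$, whose twist $\omega(x,y)\,\mathrm{id}_V=r([x,y])\,\mathrm{id}_V$ contributes $-r([x,y])\,\xi(v)$ after applying $-\xi(\cdot)$, one is left with
\[
-\xi\!\left(\rho(x)\rho(y)v\right)+\xi\!\left(\rho(y)\rho(x)v\right)+r([x,y])\,\xi(v).
\]
On the right, I would compute $\rho^*(x)\rho^*(y)(\xi)$ and $\rho^*(y)\rho^*(x)(\xi)$ by iterating the defining formula twice. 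In the commutator, the mixed terms $r(y)\,\xi(\rho(x)v)$ and $r(x)\,\xi(\rho(y)v)$ occur symmetrically and cancel, and the quadratic terms $4r(x)r(y)\,\xi(v)$ cancel as well; what survives is $\xi(\rho(y)\rho(x)v)-\xi(\rho(x)\rho(y)v)$, and adding $\omega(x,y)\,\xi(v)=r([x,y])\,\xi(v)$ reproduces the left-hand side.

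I do not expect a genuine obstacle, since the computation is mechanical. The one point that must be watched is the coefficient $2$ in the definition of $\rho^*$: it is exactly what makes the $-r([x,y])\,\xi(v)$ produced by the twist in the axiom for $\rho$ and the $+2r([x,y])\,\xi(v)$ built into $\rho^*$ combine into the $+\omega(x,y)\,\xi(v)=+r([x,y])\,\xi(v)$ forced on the right; with any other coefficient the identity fails. (In particular the naive dual $\xi\mapsto -\xi\circ\rho(x)$ is a representation only when $r([x,y])=0$, i.e.\ in the Lie case, which is consistent with the fact that $\rho^*$ reduces to the usual dual representation when $\omega=0$.)
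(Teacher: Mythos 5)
Your proposal is correct and is exactly the direct verification the paper leaves to the reader (the paper states the result as "straightforward" and omits the proof): one expands both sides on a fixed $\xi\in V^*$ and $v\in V$, uses $\omega(x,y)=r([x,y])$ from multiplicativity, and observes the cancellation of the mixed and quadratic $r$-terms in the commutator. Your remark that the coefficient $2$ is forced is also accurate and is a worthwhile sanity check.
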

   
   We refer to \( (\rho^*,V^*) \) in Proposition \ref{21} as the dual representation of the representation \( (\rho,V) \).
   \begin{definition}
   	   Let $(L, [\cdot, \cdot], r)$ be a multiplicative $\omega$-Lie algebra, and let \( (\rho, V) \) be a representation of \( L \). Define \( \rho^* : L \to \mathrm{gl}(V^*) \) by
   	   $\rho^*(x)(\xi)(v) = -\xi(\rho(x)(v)) + 2r(x)\xi(v)$, for all \( x \in L \), \( \xi \in V^* \). The pair $(\rho^*, V^*)$ is called the dual representation of $(\rho, V)$.
   \end{definition}
   
   To resolve the issue of the missing adjoint representation for $\omega$-Lie algebras, we now introduce a more general algebraic structures: generalized multiplicative $\omega$-Lie algebras.
   \begin{definition}
       A vector space \( L \) with two multiplications \( [\cdot, \cdot]_1 \) and \( [\cdot, \cdot]_2 \) over a field \( \mathrm{K} \) is called a generalized multiplicative $\omega$-Lie algebra if there exists a linear form \( r : L \to \mathrm{K} \) such that
      \[
      [x, x]_1 = 0,
      \]
      \[
      [[x, y]_1, z]_2 + [[y, z]_1, x]_2 + [[z, x]_1, y]_2 = r([x, y]_1)z + r([y, z]_1)x + r([z, x]_1)y
      \]
      for any \( x, y, z \in L \).
   \end{definition}
   On a generalized multiplicative $\omega$-Lie algebra, we can define two types of representations, referred to as Representation I and Representation II.
   \begin{definition}
   	   Let $(L, [\cdot, \cdot]_1, [\cdot, \cdot]_2, r)$ be a generalized multiplicative $\omega$-Lie algebra over $\mathrm{K}$, and let $V$ be a vector space over $\mathrm{K}$. Then a Representation I of \( L \) on \( V \) consists of two linear maps \( \rho_1, \rho_2 : L \to \mathrm{gl}(V) \) such that
   	   \[
   	   \rho_1([x, y]_1) = \rho_2(x)\rho_1(y) - \rho_2(y)\rho_1(x) + r([x, y]_1)\text{id}_V.
   	   \]
   \end{definition}
   By a direct calculation, we have
   \begin{proposition}
   	   Let $(L, [\cdot, \cdot]_1, [\cdot, \cdot]_2, r)$ be a generalized multiplicative $\omega$-Lie algebra, and let $\rho_1, \rho_2 : L \to \mathrm{gl}(V)$ be two linear maps. Then $(\rho_1, \rho_2, V)$ is a Representation I of $L$ if and only if the direct sum $L \oplus V$ of vector spaces is a generalized multiplicative $\omega$-Lie algebra (the semi-direct product) by defining the multiplication on $L \oplus V$ as follows:
   	   \[
   	   [x + u, y + v]_{\overline{1}} = [x, y]_1 + \rho_1(x)(v) - \rho_1(y)(u),
   	   \]
   	   \[
   	   [x + u, y + v]_{\overline{2}} = [x, y]_2 + \rho_1(x)(v) - \rho_2(y)(u),
   	   \]
   	   \[
   	   \overline{r}(x + u) = r(x),
   	   \]
   	   for all $x, y \in L$ and $u, v \in V$.
   \end{proposition}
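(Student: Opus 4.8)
The plan is to check the two defining axioms of a generalized multiplicative $\omega$-Lie algebra for the triple $(L \oplus V,\ [\cdot,\cdot]_{\overline{1}},\ [\cdot,\cdot]_{\overline{2}},\ \overline{r})$ and to see that their validity is equivalent to the Representation I identity for $(\rho_1,\rho_2,V)$. The anticommutativity of $[\cdot,\cdot]_{\overline{1}}$ is automatic: for $X = x+u$ we get $[X,X]_{\overline{1}} = [x,x]_1 + \rho_1(x)(u) - \rho_1(x)(u) = 0$ because $[x,x]_1 = 0$ in $L$, and this requires no hypothesis on $\rho_1$ or $\rho_2$. (We do not need $[X,X]_{\overline{2}} = 0$, which generally fails — only the first bracket must be anticommutative.) So all the content lies in the twisted Jacobi-type identity.

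First I would fix arbitrary $X = x+u$, $Y = y+v$, $Z = z+w$ in $L \oplus V$ and expand
\[
S := [[X,Y]_{\overline{1}},Z]_{\overline{2}} + [[Y,Z]_{\overline{1}},X]_{\overline{2}} + [[Z,X]_{\overline{1}},Y]_{\overline{2}},
\]
using $[X,Y]_{\overline{1}} = [x,y]_1 + \rho_1(x)(v) - \rho_1(y)(u)$ and the rule $[p+a,\ z+w]_{\overline{2}} = [p,z]_2 + \rho_1(p)(w) - \rho_2(z)(a)$ for $p \in L$, $a \in V$. I would then split $S$, and likewise the right-hand side $T := \overline{r}([X,Y]_{\overline{1}})Z + \overline{r}([Y,Z]_{\overline{1}})X + \overline{r}([Z,X]_{\overline{1}})Y$, into $L$- and $V$-components. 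The $L$-components of $S$ and $T$ are $[[x,y]_1,z]_2 + [[y,z]_1,x]_2 + [[z,x]_1,y]_2$ and $r([x,y]_1)z + r([y,z]_1)x + r([z,x]_1)y$, and these agree by the defining identity of the generalized multiplicative $\omega$-Lie algebra $L$; hence the $L$-components match with no further assumption.

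The real content is the $V$-component of $S = T$. Since $u$, $v$, $w$ range independently over $V$, this one equation separates into three operator identities, one for each of $u$, $v$, $w$. Collecting terms, each of the three takes the form
\[
\rho_1([a,b]_1) - \rho_2(a)\rho_1(b) + \rho_2(b)\rho_1(a) = r([a,b]_1)\,\mathrm{id}_V,
\]
with $(a,b)$ equal to $(x,y)$, $(y,z)$, $(z,x)$ in turn. Each of these is precisely the Representation I condition, so $S = T$ holds for all $X,Y,Z$ exactly when $(\rho_1,\rho_2,V)$ is a Representation I of $L$. For the converse one only needs a specialization: setting $u = v = 0$ and $z = 0$ (so $X = x$, $Y = y$, $Z = w \in V$) in the assumed identity on $L \oplus V$ collapses it to $\rho_1([x,y]_1)(w) - \rho_2(x)\rho_1(y)(w) + \rho_2(y)\rho_1(x)(w) = r([x,y]_1)w$ for all $w \in V$, i.e.\ the Representation I identity in operator form.

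I expect no genuine obstacle; the argument is essentially bookkeeping. The one point worth care is the asymmetry of $[\cdot,\cdot]_{\overline{2}}$ — the term $\rho_1(x)(v) - \rho_2(y)(u)$ rather than something symmetric: it is exactly this that produces the mixed composites $\rho_2(\cdot)\rho_1(\cdot)$, rather than $\rho_1\rho_1$ or $\rho_2\rho_2$, when a bracket valued in $V$ is plugged into $[\cdot,\cdot]_{\overline{2}}$, and that is why the Representation I identity — with its $\rho_2\rho_1$ composites — is the correct compatibility condition. Keeping the signs and the $\rho_1/\rho_2$ placements consistent throughout the expansion is the only thing to watch.
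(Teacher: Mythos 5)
Your proposal is correct and is exactly the "direct calculation" the paper alludes to without writing out: expanding the twisted Jacobi identity on $L\oplus V$, matching $L$-components via the axiom on $L$, and observing that the $V$-component separates into three copies of the Representation I identity (with the converse obtained by the specialization $Z=w\in V$). Nothing is missing.
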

   \begin{definition}
       Let $(L, [\cdot, \cdot]_1, [\cdot, \cdot]_2, r)$ be a generalized multiplicative $\omega$-Lie algebra over $\mathrm{K}$, and let $V$ be a vector space over $\mathrm{K}$. Then a Representation II of \( L \) on \( V \) consists of two linear maps \( \rho_1, \rho_2 : L \to \mathrm{gl}(V) \) such that
       \[
       \rho_1([x, y]_1) = \rho_1(x)\rho_2(y) - \rho_1(y)\rho_2(x) + r([x, y]_1)\text{id}_V + 2r(x)\rho_1(y) - 2r(y)\rho_1(x) - 2r(x)\rho_2(y) + 2r(y)\rho_2(x).
       \]
       If there exists a linear map \( f : L \to \mathrm{gl}(V) \) such that
       \[
       f([x, y]_1) = 2r(x)\rho_1(y) - 2r(y)\rho_1(x) - 2r(x)\rho_2(y) + 2r(y)\rho_2(x),
       \]
       for all \( x, y \in L \), then the quadruple \( (\rho_1, \rho_2, f, V) \) is called a special Representation II.
   \end{definition}
   By a direct calculation, we get
   \begin{proposition}
   	    Let $(L, [\cdot, \cdot]_1, [\cdot, \cdot]_2, r)$ be a generalized multiplicative $\omega$-Lie algebra, and let $\rho_1, \rho_2, f : L \to \mathrm{gl}(V)$ be three linear maps. Then $(\rho_1, \rho_2, f, V)$ is a special Representation II of $L$ if and only if the direct sum $L \oplus V$ of vector spaces is a generalized multiplicative $\omega$-Lie algebra (the semi-direct product) by defining the multiplication on $L \oplus V$ as follows:
   	    \[
   	    [x + u, y + v]_{\overline{1}} = [x, y]_1 + \rho_2(x)(v) - \rho_2(y)(u),
   	    \]
   	    \[
   	    [x + u, y + v]_{\overline{2}} = [x, y]_2 + \rho_1(x)(v) - \rho_1(y)(u) - f(x)(v),
   	    \]
   	    \[
   	    \overline{r}(x + u) = r(x),
   	    \]
   	    for all $x, y \in L$ and $u, v \in V$.
   \end{proposition}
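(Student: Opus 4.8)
The plan is to verify directly that the space $L\oplus V$, equipped with $[\cdot,\cdot]_{\overline{1}}$, $[\cdot,\cdot]_{\overline{2}}$ and $\overline{r}$, satisfies the two axioms of a generalized multiplicative $\omega$-Lie algebra precisely when $(\rho_1,\rho_2,f,V)$ satisfies the two identities in the definition of a special Representation II; the computation runs along the same lines as the preceding proposition for Representation I. The anticommutativity axiom is automatic: for $z=x+u$ we get $[x+u,x+u]_{\overline{1}}=[x,x]_1+\rho_2(x)(u)-\rho_2(x)(u)=0$, using only $[x,x]_1=0$ in $L$. So the entire content is the twisted Jacobi identity for $[\cdot,\cdot]_{\overline{1}}$ and $[\cdot,\cdot]_{\overline{2}}$, which I would evaluate on arbitrary elements $x+u$, $y+v$, $z+w$ with $x,y,z\in L$, $u,v,w\in V$, and then project the resulting identity onto the two summands $L$ and $V$ separately.

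On the $L$-summand: since $\overline{r}(x+u)=r(x)$ and the $L$-parts of $[x+u,y+v]_{\overline{1}}$ and $[x+u,y+v]_{\overline{2}}$ are $[x,y]_1$ and $[x,y]_2$, the $L$-component of the identity is exactly the twisted Jacobi identity of $(L,[\cdot,\cdot]_1,[\cdot,\cdot]_2,r)$, hence holds by hypothesis and imposes nothing. So everything reduces to the $V$-component. Expanding $[[x+u,y+v]_{\overline{1}},z+w]_{\overline{2}}$ and its two cyclic partners with the explicit bracket formulas, each term contributes expressions of the forms $\rho_1([x,y]_1)(w)$, $f([x,y]_1)(w)$, $\rho_1(z)\rho_2(x)(v)$, $\rho_1(z)\rho_2(y)(u)$, and similar, while the right-hand side contributes $r([x,y]_1)(w)$, $r([y,z]_1)(u)$, $r([z,x]_1)(v)$. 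Since $u,v,w$ are arbitrary, the $V$-component is equivalent to the vanishing, for all $x,y,z\in L$, of the operator coefficient of each of $u$, $v$, $w$; by cyclic symmetry of the Jacobi sum these three coefficient identities coincide after relabeling, and the remaining task is to rearrange the common identity into the two relations defining a special Representation II — the one expressing $\rho_1([x,y]_1)$ and the one expressing $f([x,y]_1)$ — taking care of how the $r$-terms coming from $\overline{r}$ and from the defining relation of $L$ get distributed. The forward implication then follows by substituting the special Representation II relations into the expansion so that the $V$-component collapses, and the converse by reading the relations off the coefficients.

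I expect the only real difficulty to be the bookkeeping in the $V$-component: one must track carefully which of $\rho_1$, $\rho_2$, $f$ acts and on which of $u$, $v$, $w$, sum the contributions over the three cyclic terms, and then confirm that the collected coefficients reproduce exactly the special Representation II relations — in particular the relation for $\rho_1$ with its $2r(x)\rho_1(y)$, $2r(y)\rho_1(x)$, $2r(x)\rho_2(y)$, $2r(y)\rho_2(x)$ terms, together with the companion relation for $f$. A convenient way to stay in control is to first set $u=v=w=0$ to recover the $L$-component, then switch on the three $V$-inputs one at a time; because $f$ enters only through $[\cdot,\cdot]_{\overline{2}}$ its contribution is easy to isolate, and the delicate check is aligning the $r(x)$- and $r([x,y]_1)$-type terms so that the coefficient identity is correctly matched with both defining relations of a special Representation II.
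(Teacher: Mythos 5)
Your overall strategy is the right one --- the paper itself gives no proof beyond ``by a direct calculation,'' and a direct verification is indeed what is intended. The antisymmetry check, the reduction of the $L$-component to the twisted Jacobi identity of $L$, and the observation that the three coefficient identities in $u,v,w$ coincide under cyclic relabelling are all correct. The forward implication also goes through exactly as you describe: substituting the two defining relations of a special Representation II makes the $V$-component collapse.

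The gap is in the converse, at the step where you propose to ``rearrange the common identity into the two relations defining a special Representation II.'' Carrying out the expansion, the coefficient of $w$ in the $V$-component of the twisted Jacobi identity is
\begin{equation*}
\rho_1([x,y]_1)-f([x,y]_1)-\rho_1(x)\rho_2(y)+\rho_1(y)\rho_2(x)=r([x,y]_1)\,\mathrm{id}_V ,
\end{equation*}
i.e.\ the semidirect product is a generalized multiplicative $\omega$-Lie algebra if and only if $\rho_1([x,y]_1)=\rho_1(x)\rho_2(y)-\rho_1(y)\rho_2(x)+r([x,y]_1)\,\mathrm{id}_V+f([x,y]_1)$ for all $x,y$. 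This single identity is precisely what you get by substituting the relation for $f([x,y]_1)$ into the relation for $\rho_1([x,y]_1)$; it is strictly weaker than the conjunction of the two, and no rearrangement will split one operator identity into two independent ones. Concretely, take $L$ two-dimensional with $[\cdot,\cdot]_1=[\cdot,\cdot]_2=0$, $r(e_1)=1$, $r(e_2)=0$, $\rho_1=0$, $f=0$, and $\rho_2(e_2)=\mathrm{id}_V$, $\rho_2(e_1)=0$: the displayed identity holds (both sides vanish), so $L\oplus V$ is a generalized multiplicative $\omega$-Lie algebra, yet $f([e_1,e_2]_1)=0\neq-2\,\mathrm{id}_V=2r(e_1)\rho_1(e_2)-2r(e_2)\rho_1(e_1)-2r(e_1)\rho_2(e_2)+2r(e_2)\rho_2(e_1)$, so $(\rho_1,\rho_2,f,V)$ is not a special Representation II. So the ``only if'' direction cannot be obtained by reading off coefficients; your proof can establish the equivalence of the semidirect-product condition with the combined identity above, but not with the definition of special Representation II as stated, and you should flag this rather than assert the rearrangement.
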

   
   Based on the above, we define two types of generalized representations for multiplicative $\omega$-Lie algebras through Representation I and Representation II of generalized multiplicative $\omega$-Lie algebras, referred to as Generalized Representation I and Generalized Representation II.
   \begin{definition}
   	   Let $(L, [\cdot, \cdot], r)$ be a multiplicative $\omega$-Lie algebra over \(\mathrm{K}\) and $V$ a vector space over \(\mathrm{K}\). Then a Generalized Representation I of $L$ on $V$ are two linear maps $\rho_1, \rho_2 : L \to \mathrm{gl}(V)$ such that
   	   \[
   	   \rho_1([x, y]) = \rho_2(x) \rho_1(y) - \rho_2(y) \rho_1(x) + r([x, y]) \mathrm{id}_V.
   	   \]
   \end{definition}
   \begin{definition}
       Let $(L, [\cdot, \cdot], r)$ be a multiplicative $\omega$-Lie algebra over \(\mathrm{K}\) and $V$ a vector space over \(\mathrm{K}\). Then a Generalized Representation II of $L$ on $V$ are two linear maps $\rho_1, \rho_2 : L \to \mathrm{gl}(V)$ such that
       \[
       \rho_1([x, y]) = \rho_1(x) \rho_2(y) - \rho_1(y) \rho_2(x) + r([x, y]) \mathrm{id}_V + 2r(x) \rho_1(y) - 2r(y) \rho_1(x) - 2r(x) \rho_2(y) + 2r(y) \rho_2(x).
       \]
   \end{definition}
   When $\rho_1 = \rho_2$, the Generalized Representations I and II reduce to representations of multiplicative $\omega$-Lie algebras. When the multiplicative $\omega$-Lie algebra degenerates to a Lie algebra, these representations degenerate to the representations of Lie algebras.
   
   From the generalized representations of multiplicative $\omega$-Lie algebras, we are now able to construct an adjoint-like representation for multiplicative $\omega$-Lie algebras, which we call the generalized adjoint representation.
   \begin{definition}
   	   Let $(L, [\cdot, \cdot], r)$ be a multiplicative $\omega$-Lie algebra. Define the maps $\mathrm{ad_1}, \mathrm{ad_2} : L \to \mathrm{gl}(L)$ by
       $
   	   \mathrm{ad_1}x(y) = [x, y], \mathrm{ad_2}x(y) = [x, y] + r(y)x,
   	   $
   	   for all $x, y \in L$. Then the quadruple $(\mathrm{ad_1}, \mathrm{ad_2}, L)$ is called the generalized adjoint representation of $L$.
   \end{definition}
   It is straightforward to obtain the following conclusion.
   \begin{proposition}
   	   Let $(L, [\cdot, \cdot], r)$ be a multiplicative $\omega$-Lie algebra. Then its generalized adjoint representation $(\mathrm{ad_1}, \mathrm{ad_2}, L)$ is a Generalized Representation I of $L$.
   \end{proposition}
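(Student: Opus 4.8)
The plan is to unwind the definition of Generalized Representation~I in the case $V=L$, $\rho_1=\mathrm{ad}_1$, $\rho_2=\mathrm{ad}_2$, and reduce the required identity to the multiplicative $\omega$-Jacobi identity by a short sign computation. First I would observe that $\mathrm{ad}_1$ and $\mathrm{ad}_2$ are indeed linear maps $L\to\mathrm{gl}(L)$: this is immediate from linearity of $[\cdot,\cdot]$ in its first argument, together with linearity of $[\cdot,\cdot]$ and of $r$ for $\mathrm{ad}_2$. Hence the only content to check is that, for all $x,y,z\in L$,
\[
\mathrm{ad}_1([x,y])(z)=\mathrm{ad}_2(x)\mathrm{ad}_1(y)(z)-\mathrm{ad}_2(y)\mathrm{ad}_1(x)(z)+r([x,y])z .
\]

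Next I would expand the right-hand side directly from the definitions $\mathrm{ad}_1 x(y)=[x,y]$ and $\mathrm{ad}_2 x(y)=[x,y]+r(y)x$, obtaining
\[
[x,[y,z]]+r([y,z])x-[y,[x,z]]-r([x,z])y+r([x,y])z .
\]
Then I would use anticommutativity of $[\cdot,\cdot]$ to rewrite $[x,[y,z]]=-[[y,z],x]$ and $-[y,[x,z]]=[[x,z],y]=-[[z,x],y]$, and linearity of $r$ together with $[x,z]=-[z,x]$ to replace $-r([x,z])y$ by $r([z,x])y$. After these substitutions the right-hand side becomes
\[
-[[y,z],x]-[[z,x],y]+r([x,y])z+r([y,z])x+r([z,x])y ,
\]
which is exactly $[[x,y],z]=\mathrm{ad}_1([x,y])(z)$ by the multiplicative $\omega$-Jacobi identity. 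This finishes the verification.

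There is essentially no conceptual obstacle here; the proposition is a one-line consequence of the axioms. The only point requiring care is the sign bookkeeping when converting brackets between the two orders and when transporting the $r$-terms, so in writing it out I would organize the three $r$-contributions according to the cyclic pattern $(x,y,z)$ so that they line up term-by-term with the right-hand side of the multiplicative $\omega$-Jacobi identity.
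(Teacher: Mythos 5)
Your verification is correct: expanding $\mathrm{ad}_2(x)\mathrm{ad}_1(y)(z)-\mathrm{ad}_2(y)\mathrm{ad}_1(x)(z)+r([x,y])z$ and using anticommutativity to put the brackets and the $r$-terms into the cyclic pattern of the multiplicative $\omega$-Jacobi identity yields exactly $[[x,y],z]$. The paper omits this proof as "straightforward," and your computation is precisely the intended direct check, so there is nothing to add.
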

   
   Although we have already constructed a dual representation for multiplicative $\omega$-Lie algebras, in order to place it within the broader framework of generalized representations, we slightly modify this dual representation and refer to it as the generalized dual representation.
   \begin{definition}
       Let $(L, [\cdot, \cdot], r)$ be a multiplicative $\omega$-Lie algebra, and let $(\rho_1, \rho_2, V)$ be a Generalized Representation I of $L$. Define the maps $\rho^*_1, \rho^*_2 : L \to \mathrm{gl}(V^*)$ by:
       \[
       \rho^*_1(x)(\xi)(v) = -\xi(\rho_1(x)(v)) + 2r(x)\xi(v),
       \]
       \[
       \rho^*_2(x)(\xi)(v) = -\xi(\rho_2(x)(v)) + 2r(x)\xi(v),
       \]
       for all $x \in L$, $\xi \in V^*$, and $v \in V$. Then the triple $(\rho^*_1, \rho^*_2, V^*)$ is called the generalized dual representation of $(\rho_1, \rho_2, V)$.
   \end{definition}
   By a direct calculation, we have
   \begin{proposition}
   	   Let $(L, [\cdot, \cdot], r)$ be a multiplicative $\omega$-Lie algebra, and let $(\rho_1, \rho_2, V)$ be a Generalized Representation I of $L$. Then its generalized dual representation $(\rho^*_1, \rho^*_2, V^*)$ is a Generalized Representation II of $L$.
   \end{proposition}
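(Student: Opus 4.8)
The plan is to verify the defining identity of a Generalized Representation II by testing it against an arbitrary $\xi \in V^*$ and $v \in V$. Fix $x,y\in L$. Evaluating the desired equality
\[
\rho_1^*([x,y]) = \rho_1^*(x)\rho_2^*(y) - \rho_1^*(y)\rho_2^*(x) + r([x,y])\mathrm{id}_{V^*} + 2r(x)\rho_1^*(y) - 2r(y)\rho_1^*(x) - 2r(x)\rho_2^*(y) + 2r(y)\rho_2^*(x)
\]
first on $\xi$ and then on $v$ turns the whole statement into an identity of scalars in $\mathrm{K}$, which I establish by expanding every term through the defining formula $\rho_i^*(z)(\xi)(w) = -\xi(\rho_i(z)(w)) + 2r(z)\xi(w)$ for $i=1,2$.

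For the left-hand side I substitute the Generalized Representation I relation $\rho_1([x,y]) = \rho_2(x)\rho_1(y) - \rho_2(y)\rho_1(x) + r([x,y])\mathrm{id}_V$ into $\rho_1^*([x,y])(\xi)(v) = -\xi(\rho_1([x,y])(v)) + 2r([x,y])\xi(v)$, which collapses (the $-r([x,y])\xi(v)$ from the identity term and $+2r([x,y])\xi(v)$ combine) to
\[
\rho_1^*([x,y])(\xi)(v) = -\xi\big(\rho_2(x)\rho_1(y)(v)\big) + \xi\big(\rho_2(y)\rho_1(x)(v)\big) + r([x,y])\,\xi(v).
\]
On the right-hand side, each composite $\rho_1^*(x)\rho_2^*(y)$ and $\rho_1^*(y)\rho_2^*(x)$ unfolds into four summands (a $\xi\circ\rho_2\circ\rho_1$ term, two mixed terms of the form $r\cdot(\xi\circ\rho_i)$, and a $4r(x)r(y)\xi(v)$ term), and each of the four remaining terms $2r(x)\rho_1^*(y),\ -2r(y)\rho_1^*(x),\ -2r(x)\rho_2^*(y),\ 2r(y)\rho_2^*(x)$ unfolds into two summands.

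The main step is the bookkeeping. After collecting: the $r(x)r(y)\xi(v)$ contributions cancel among themselves (the two coming from the composites cancel each other, and the four coming from the linear terms sum to $+4-4-4+4=0$); and each mixed term of the form $r(y)\xi(\rho_1(x)(v))$, $r(x)\xi(\rho_1(y)(v))$, $r(x)\xi(\rho_2(y)(v))$, $r(y)\xi(\rho_2(x)(v))$ occurs exactly twice, once from a composite and once from a linear term, with opposite signs. What survives on the right-hand side is exactly $-\xi(\rho_2(x)\rho_1(y)(v)) + \xi(\rho_2(y)\rho_1(x)(v)) + r([x,y])\xi(v)$, matching the left-hand side. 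I do not expect a genuine obstacle: the only real risk is a sign slip in tracking the eight-term right-hand side, so I would organize the computation by grouping all contributions according to their type ($\xi\circ\rho_2\circ\rho_1$, $r\cdot(\xi\circ\rho_i)$, and $r\cdot r\cdot\xi$) rather than expanding term by term.

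As a sanity check, one may also view $(L,[\cdot,\cdot],r)$ as a generalized multiplicative $\omega$-Lie algebra with $[\cdot,\cdot]_1=[\cdot,\cdot]_2=[\cdot,\cdot]$, so that a Generalized Representation I is a Representation I; the claim then matches the earlier observation that dualizing intertwines the two semidirect-product constructions on $L\oplus V$ and $L\oplus V^*$. However, the scalar computation above is shorter and entirely self-contained, so that is the route I would write up.
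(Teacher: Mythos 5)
Your proof is correct, and it coincides with what the paper intends: the paper offers no written argument beyond ``by a direct calculation,'' and the calculation you describe --- expanding both sides on $\xi$ and $v$ via $\rho_i^*(z)(\xi)(w)=-\xi(\rho_i(z)(w))+2r(z)\xi(w)$, substituting the Generalized Representation I identity on the left, and checking that the $r(x)r(y)\xi(v)$ and mixed $r\cdot(\xi\circ\rho_i)$ terms cancel in pairs --- is exactly that calculation, with the bookkeeping done correctly.
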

   
   Furthermore, we consider a special type of Generalized Representation II, referred to as the Associated Generalized Representation II.
   \begin{definition}
   	   Let $(L, [\cdot, \cdot], r)$ be a multiplicative $\omega$-Lie algebra over \( \mathrm{K} \) and $L^*$ is the dual space of $L$. Then an Associated Generalized Representation II of \( L \) on \( L^* \) is a Generalized Representation II $(\rho_1, \rho_2, L^*)$  satisfying the following condition:
   	   \[
   	   \rho_2(x)(\xi) = \rho_1(x)(\xi) - \xi(x)r.
   	   \]
   \end{definition}
   It is straightforward to get the following conclusion.
   \begin{proposition}
       Let $(L, [\cdot, \cdot], r)$ be a multiplicative $\omega$-Lie algebra. Then the generalized dual representation of its generalized adjoint representation is an Associated Generalized Representation II.
   \end{proposition}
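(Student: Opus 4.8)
The plan is to obtain almost everything for free from the two propositions already proved. The generalized adjoint representation $(\mathrm{ad}_1,\mathrm{ad}_2,L)$ is a Generalized Representation~I of $L$, and the generalized dual representation of any Generalized Representation~I is a Generalized Representation~II; composing these two facts, $(\mathrm{ad}_1^{*},\mathrm{ad}_2^{*},L^{*})$ is automatically a Generalized Representation~II of $L$ on $L^{*}$, with $\mathrm{ad}_i^{*}$ playing the role of $\rho_i^{*}$. So the only thing that genuinely needs checking is the extra defining condition of an Associated Generalized Representation~II, namely
\[
\mathrm{ad}_2^{*}(x)(\xi)=\mathrm{ad}_1^{*}(x)(\xi)-\xi(x)\,r
\]
for all $x\in L$ and $\xi\in L^{*}$.

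To verify this I would evaluate both sides on an arbitrary $v\in L$, using the explicit formulas $\mathrm{ad}_1 x(v)=[x,v]$ and $\mathrm{ad}_2 x(v)=[x,v]+r(v)x$ together with the definition $\rho_i^{*}(x)(\xi)(v)=-\xi(\rho_i(x)(v))+2r(x)\xi(v)$. The left-hand side becomes $-\xi([x,v])-r(v)\xi(x)+2r(x)\xi(v)$, while the right-hand side becomes $-\xi([x,v])+2r(x)\xi(v)-\xi(x)r(v)$; these coincide, so the condition holds. Since $v$ was arbitrary this gives the identity of functionals, and the proof is complete.

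There is essentially no obstacle here: the statement is a bookkeeping consequence of the preceding structure results plus a one-line evaluation. The only points requiring care are (i) citing the relevant proposition for the Generalized Representation~II part rather than re-deriving that identity by hand, and (ii) tracking that the scalar $r(v)$ produced by the $\mathrm{ad}_2$-correction term in the generalized adjoint representation is exactly what matches the $-\xi(x)r$ correction in the definition of an Associated Generalized Representation~II. Accordingly I would present this as a short direct computation.
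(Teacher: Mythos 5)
Your proposal is correct and follows exactly the route the paper intends (the paper omits the proof as ``straightforward''): cite the two preceding propositions to get that $(\mathrm{ad}_1^*,\mathrm{ad}_2^*,L^*)$ is a Generalized Representation II, then verify the extra condition $\mathrm{ad}_2^*(x)(\xi)=\mathrm{ad}_1^*(x)(\xi)-\xi(x)r$ by a pointwise evaluation, where the $-r(v)\xi(x)$ term coming from the $r(v)x$ correction in $\mathrm{ad}_2$ matches $-\xi(x)r$ evaluated at $v$. The computation checks out.
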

   
   \section{Matched pairs, Manin triples and Multiplicative $\omega$-Lie bialgebras}\label{3}
   In this section, we will introduce the Manin triples and matched pairs of multiplicative $\omega$-Lie algebras, and give the definition of a multiplicative $\omega$-Lie bialgebra, proving the equivalence among the three.
   
   
   Let $(L, [\cdot, \cdot]_L, r)$ and $(L^*, [\cdot, \cdot]_{L^*}, r^*)$ be two multiplicative $\omega$-Lie algebras, where $L^*$ is the dual space of $L$. Let $(\rho_1, \rho_2, L^*)$ and $(\rho^*_1, \rho^*_2, L)$ be Associated Generalized Representation II of $L$ and $L^*$ respectively. On the direct sum $L \oplus L^*$ of the underlying vector spaces, define
   \[
   r_{L \oplus L^*} : L \oplus L^* \to F, \quad r_{L \oplus L^*}(x + \xi) = r(x) + r^*(\xi),
   \]
   and define a bilinear map \([\cdot, \cdot]_{L \oplus L^*} : \otimes^2 (L \oplus L^*) \to L \oplus L^*\) by
   \[
   [x + \xi, y + \eta]_{L \oplus L^*} = ([x, y]_L - \rho^*_2(\eta)(x) + \rho^*_1(\xi)(y) - \xi(y) u_r) + ([\xi, \eta]_{L^*} + \rho_1(x)(\eta) - \rho_2(y)(\xi) - \eta(x) r),
   \]
   for all $x, y \in L$ and $\xi, \eta \in L^*$, where $u_r \in L$ satisfies
   \[
   \langle r^*, a \rangle = \langle a, u_r \rangle, \quad \forall\, a \in L^*.
   \]
   \begin{theorem}\label{31}
   	   Under the above notation, $(L \oplus L^*, [\cdot, \cdot]_{L \oplus L^*}, r_{L \oplus L^*})$ is a multiplicative $\omega$-Lie algebra if and only if the following conditions hold:
   	   \begin{enumerate}
   	   	\item 
   	   	$\rho^*_2(\omega)([x, y]_L) = [\rho^*_2(\omega)(x), y]_L + [x, \rho^*_2(\omega)(y)]_L + \rho^*_1(\rho_2(y)(\omega))(x) - \rho^*_1(\rho_2(x)(\omega))(y)  \\
   	   	\quad + (\rho_2(x)(\omega))(y)u_r - (\rho_2(y)(\omega))(x)u_r + r(\rho^*_2(\omega)(y))x - r(\rho^*_2(\omega)(x))y + r^*(\rho_2(x)(\omega))y \\
   	   	\quad - r^*(\rho_2(y)(\omega))x$,
   	   	
   	   	\item 
   	   	$[\eta, \xi]_{L^*}(z)u_r = 2r^*(\xi)\rho^*_2(\eta)(z) + 2r^*(\eta)\rho^*_1(\xi)(z) - 2r^*(\xi)\rho^*_1(\eta)(z) - 2r^*(\eta)\rho^*_2(\xi)(z) \\
   	   	\quad - \xi(\rho^*_2(\eta)(z))u_r + \eta(\rho^*_2(\xi)(z))u_r$,
   	   	
   	   	\item 
   	   	$\rho_2(z)([\xi, \eta]_{L^*}) = [\rho_2(z)(\xi), \eta]_{L^*} + [\xi, \rho_2(z)(\eta)]_{L^*} + \rho_1(\rho^*_2(\eta)(z))(\xi) - \rho_1(\rho^*_2(\xi)(z))(\eta) \\
   	   	\quad + \eta(\rho^*_2(\xi)(z))r - \xi(\rho^*_2(\eta)(z))r + r^*(\rho_2(z)(\eta))\xi - r^*(\rho_2(z)(\xi))\eta + r(\rho^*_2(\xi)(z))\eta - r(\rho^*_2(\eta)(z))\xi$,
   	   	
   	   	\item 
   	   	$\omega([y, x]_L)r = 2r(x)\rho_2(y)(\omega) + 2r(y)\rho_1(x)(\omega) - 2r(x)\rho_1(y)(\omega) - 2r(y)\rho_2(x)(\omega) \\
   	   	\quad - \rho_2(y)(\omega)(x)r + \rho_2(x)(\omega)(y)r$.
   	   \end{enumerate}
   	   for all $x, y, z \in L$ and $\xi, \eta, \omega \in L^*$.
   \end{theorem}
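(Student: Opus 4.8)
The plan is to check directly the two defining axioms of a multiplicative $\omega$-Lie algebra for the triple $(L\oplus L^*,[\cdot,\cdot]_{L\oplus L^*},r_{L\oplus L^*})$: anticommutativity of $[\cdot,\cdot]_{L\oplus L^*}$ and the multiplicative $\omega$-Jacobi identity relative to $r_{L\oplus L^*}$. For anticommutativity it suffices to verify $[x+\xi,x+\xi]_{L\oplus L^*}=0$. The $[x,y]_L$ and $[\xi,\eta]_{L^*}$ summands already vanish on the diagonal, so what remains is to see that $-\rho^*_2(\xi)(x)+\rho^*_1(\xi)(x)-\xi(x)u_r=0$ and $\rho_1(x)(\xi)-\rho_2(x)(\xi)-\xi(x)r=0$. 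The second equation is exactly the defining relation of an Associated Generalized Representation II for $(\rho_1,\rho_2,L^*)$, and the first is the analogous relation for $(\rho^*_1,\rho^*_2,L)$, using the identification $r^*\leftrightarrow u_r$ given by $\langle r^*,a\rangle=\langle a,u_r\rangle$. Hence anticommutativity holds unconditionally, without invoking (1)--(4).

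For the multiplicative $\omega$-Jacobi identity, set
\[
J(a,b,c):=[[a,b],c]+[[b,c],a]+[[c,a],b]-r_{L\oplus L^*}([a,b])\,c-r_{L\oplus L^*}([b,c])\,a-r_{L\oplus L^*}([c,a])\,b,
\]
where all unlabelled brackets are $[\cdot,\cdot]_{L\oplus L^*}$. Since $[\cdot,\cdot]_{L\oplus L^*}$ and $r_{L\oplus L^*}$ are multilinear and $L\oplus L^*=L\oplus L^*$ as vector spaces, it is enough to show $J$ vanishes on the four homogeneity types of triples with entries lying in $L$ or in $L^*$: (i) all three in $L$; (ii) all three in $L^*$; (iii) two in $L$ and one in $L^*$; (iv) one in $L$ and two in $L^*$. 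In case (i), $J\equiv 0$ is precisely the multiplicative $\omega$-Jacobi identity for $(L,[\cdot,\cdot]_L,r)$; in case (ii) it is the one for $(L^*,[\cdot,\cdot]_{L^*},r^*)$. Both are automatic.

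It remains to treat cases (iii) and (iv), where each of the three brackets in $J$ must be expanded using the cross-term formula in the definition of $[\cdot,\cdot]_{L\oplus L^*}$, the formulas $\rho^*_i(\xi)(v)=-\xi(\rho_i(\cdot)(v))+2r(\cdot)\xi(v)$ (and the mirror formulas for $\rho^*_i$), and the pairing $\langle r^*,a\rangle=\langle a,u_r\rangle$; one then projects onto the $L$- and $L^*$-summands and simplifies using that $(\rho_1,\rho_2,L^*)$ and $(\rho^*_1,\rho^*_2,L)$ are Generalized Representations II that are moreover \emph{Associated}. Taking $a=x$, $b=y\in L$ and $c=\omega\in L^*$ in case (iii), the vanishing of the $L$-component of $J(x,y,\omega)$ rearranges precisely into identity (1), and the vanishing of its $L^*$-component into identity (4). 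Taking $a=z\in L$ and $b=\xi$, $c=\eta\in L^*$ in case (iv), the $L^*$-component of $J(z,\xi,\eta)$ gives identity (3), and the $L$-component (the one carrying the $u_r$ terms) gives identity (2). This establishes the "only if" direction; for the "if" direction the same four identities, together with the axioms of $L$ and $L^*$, reassemble $J\equiv 0$ on all of $L\oplus L^*$.

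The main obstacle is purely computational bookkeeping: in cases (iii) and (iv) each bracket expands into roughly a dozen terms once the cross-term and dual-representation formulas are inserted, and the $u_r$- and $r$-correction terms have to be tracked carefully through the pairing. The organizing principle I would use is to sort the terms in each projection by which of the three arguments they are linear in and by whether they carry a factor of $r$, $r^*$, $u_r$, or none; the Generalized Representation II relation is applied to collapse the $\rho_1([\cdot,\cdot])$ (resp. $\rho^*_1([\cdot,\cdot])$) summand, and the Associated relation $\rho_2(x)(\xi)=\rho_1(x)(\xi)-\xi(x)r$ (and its mirror) is used repeatedly to merge $\rho_1$- and $\rho_2$-terms, so that the four stated identities emerge in the displayed form rather than as unrecognizable equivalents.
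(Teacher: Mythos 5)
Your proposal is correct and follows the same direct-verification route the paper intends (the paper itself omits the computation as ``routine''): anticommutativity reduces exactly to the two Associated Generalized Representation II relations, and the alternating trilinear Jacobi defect splits over the four homogeneity types, with the $L$- and $L^*$-components of the two mixed types yielding precisely conditions (1)--(4). Your matching of each condition to the correct case and component checks out, so the only thing left unwritten is the term-by-term expansion, which the paper does not supply either.
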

   \begin{proof}
       The verification is a routine calculation and is therefore omitted.
   \end{proof}
   \begin{definition}
        Let $(L, [\cdot, \cdot], r)$ be a multiplicative $\omega$-Lie algebra. A matched pair of multiplicative $\omega$-Lie algebras is a quadruple \((L, L^*; (\rho_1, \rho_2), (\rho^*_1, \rho^*_2))\) consisting of two multiplicative $\omega$-Lie algebras $L$ and $L^*$, together with Associated Generalized Representation II $(\rho_1, \rho_2, L^*)$ and $(\rho^*_1, \rho^*_2, L)$ such that the four conditions in Theorem \ref{31} are satisfied.
   \end{definition}
   When the multiplicative $\omega$-Lie algebra degenerates to a Lie algebra, the above matched pair reduces to a matched pair of Lie algebras.
   
   We now introduce concepts related to Manin triples for multiplicative $\omega$-Lie algebras.
   \begin{definition}
       A bilinear form \( B : L \times L \to F \) on a multiplicative $\omega$-Lie algebra $(L, [\cdot, \cdot], r)$ is called invariant if it satisfies the following condition:
       \[
       B([x, y], z) = B(x, [y, z]) - 2r(y)B(x, z) + r(x)B(y, z) + r(z)B(x, y),
       \]
       for all \( x, y, z \in L \).
   \end{definition}
   When the $\omega$-Lie algebra degenerates to a Lie algebra, the above invariance condition reduces precisely to the standard invariance condition for a bilinear form on a Lie algebra.
   \begin{definition}
       A Manin triple of multiplicative $\omega$-Lie algebras is a triple of multiplicative $\omega$-Lie algebras $(H; L, L')$ together with a nondegenerate symmetric invariant bilinear form $B(\cdot, \cdot)$ on $H$ such that $L$ and $L'$ are isotropic multiplicative $\omega$-Lie subalgebras of $H$, i.e.,$\forall\, x, y \in L, B(x, y) = 0,\forall\, x', y' \in L', B(x', y') = 0$, and $H = L \oplus L'$ as a vector space.
   \end{definition}
   \begin{theorem}\label{35}
   	   Let $(L, [\cdot, \cdot]_L, r)$ and $(L^*, [\cdot, \cdot]_{L^*}, r^*)$ be two multiplicative $\omega$-Lie algebras. Then the triple $(L \oplus L^*; L, L^*)$ is a Manin triple of multiplicative $\omega$-Lie algebras associated to the nondegenerate symmetric invariant bilinear form on $L \oplus L^*$ defined by 
   	   \begin{equation}
   	   	 \mathrm{B}\left( x + a, y + b \right) \coloneqq \langle b, x \rangle + \langle a, y \rangle \tag{3.1} \label{123}
   	   \end{equation}
   	   if and only if $(L, L^*; (\mathrm{ad}^*_1, \mathrm{ad}^*_2), (\mathrm{AD}^*_1, \mathrm{AD}^*_2))$ is a matched pair of multiplicative $\omega$-Lie algebras. Here, $(\mathrm{ad}^*_1, \mathrm{ad}^*_2)$ denotes the generalized dual representation of the generalized adjoint representation of $L$, and $(\mathrm{AD}^*_1, \mathrm{AD}^*_2)$ denotes the corresponding one for $L^*$.
   \end{theorem}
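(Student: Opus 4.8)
The plan is to read the statement as an application of Theorem \ref{31}, together with the observation that the prescribed bilinear form $B$ automatically encodes the passage from the generalized adjoint representation to its generalized dual. First I would dispose of the routine structural facts about $B$ defined by \eqref{123}: it is symmetric and nondegenerate, both $L$ and $L^*$ are totally isotropic for $B$, and, since $L$ and $L^*$ are required to be multiplicative $\omega$-Lie subalgebras, the linear form on $L\oplus L^*$ must restrict to $r$ on $L$ and to $r^*$ on $L^*$, hence equals $r_{L\oplus L^*}$. Consequently the only genuine content in ``$(L\oplus L^*;L,L^*)$ is a Manin triple associated to $B$'' is that $L\oplus L^*$ carries a multiplicative $\omega$-Lie bracket having $L$ and $L^*$ as subalgebras and making $B$ invariant.

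For the implication ``matched pair $\Rightarrow$ Manin triple'', the proposition identifying the generalized dual of the generalized adjoint representation as an Associated Generalized Representation II guarantees that $(\mathrm{ad}^*_1,\mathrm{ad}^*_2,L^*)$ and $(\mathrm{AD}^*_1,\mathrm{AD}^*_2,L)$ are legitimate data for the bracket $[\cdot,\cdot]_{L\oplus L^*}$ of Theorem \ref{31}. The matched-pair hypothesis is exactly the four conditions of that theorem, so $(L\oplus L^*,[\cdot,\cdot]_{L\oplus L^*},r_{L\oplus L^*})$ is a multiplicative $\omega$-Lie algebra; the bracket visibly restricts to $[\cdot,\cdot]_L$ and to $[\cdot,\cdot]_{L^*}$, so $L,L^*$ are subalgebras, and they are isotropic by the previous paragraph. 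The remaining point, invariance of $B$, I would verify by substituting the defining formulas $\mathrm{ad}^*_1(x)(\xi)(v)=-\xi([x,v])+2r(x)\xi(v)$ and $\mathrm{ad}^*_2(x)(\xi)(v)=-\xi([x,v]+r(v)x)+2r(x)\xi(v)$, together with their $L^*$-analogues, into the invariance identity, splitting into cases according to how many of the three arguments lie in $L$ versus $L^*$; each case collapses to an identity already contained in the definition of the dual representation, so no new constraint appears.

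For the converse, start from a Manin triple with bracket $[\cdot,\cdot]_H$. The subalgebra condition forces $[\cdot,\cdot]_H$ to agree with $[\cdot,\cdot]_L$ on $L$ and with $[\cdot,\cdot]_{L^*}$ on $L^*$, so only the mixed brackets $[x,\eta]_H$ with $x\in L$, $\eta\in L^*$ are a priori free. I would recover them from invariance of $B$: applying the invariance identity to $B([z,x]_L,\eta)$ with $z,x\in L$ pins down the $L^*$-component of $[x,\eta]_H$ as $\mathrm{ad}^*_1(x)(\eta)-\eta(x)r$, and applying it to $B([x,\eta]_H,\zeta)$ with $\zeta\in L^*$ pins down the $L$-component as $-\mathrm{AD}^*_2(\eta)(x)$. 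Hence $[\cdot,\cdot]_H$ coincides with the bracket $[\cdot,\cdot]_{L\oplus L^*}$ of Theorem \ref{31} built from $(\mathrm{ad}^*_1,\mathrm{ad}^*_2)$ and $(\mathrm{AD}^*_1,\mathrm{AD}^*_2)$. Since by hypothesis $(L\oplus L^*,[\cdot,\cdot]_H,r_H)$ is a multiplicative $\omega$-Lie algebra, Theorem \ref{31} yields the four conditions, and combined with the Associated Generalized Representation II property of the dual--adjoint data this is exactly the asserted matched pair.

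The main obstacle is purely the bookkeeping of the invariance computations: in the forward direction, the case analysis verifying that $B$ is invariant for the explicit bracket, and in the converse, the extraction of the two mixed components from invariance. Care is needed with the identifications $(L^*)^*\cong L$ and $r^*\leftrightarrow u_r$, with the two separate ``dual representation'' conventions applied to $L$ and to $L^*$, and with the placement of signs; but nothing beyond routine linear algebra is involved, so the equivalence should follow cleanly once Theorem \ref{31} and the structure of the generalized adjoint and dual representations are in hand.
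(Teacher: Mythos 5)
Your proposal is correct and follows essentially the same route as the paper: in the forward direction, the matched-pair hypothesis yields the multiplicative $\omega$-Lie structure on $L\oplus L^*$ via Theorem \ref{31}, and the only substantive task is the case-by-case verification that $B$ is invariant; in the converse, invariance of $B$ is used to pin down the mixed brackets as $[x,\eta]=-\mathrm{AD}^*_2(\eta)(x)+\mathrm{ad}^*_1(x)(\eta)-\eta(x)r$, after which Theorem \ref{31} delivers the matched-pair conditions. The bookkeeping points you flag (the identification $r^*\leftrightarrow u_r$ and the Associated Generalized Representation II relation $\mathrm{ad}^*_2(x)(\xi)=\mathrm{ad}^*_1(x)(\xi)-\xi(x)r$) are exactly the ones the paper's computation relies on.
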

   \begin{proof}
   	   	Suppose \((L, L^*, (\mathrm{ad}_1^*, \mathrm{ad}_2^*), (\mathrm{AD}_1^*, \mathrm{AD}_2^*))\) is a matched pair of multiplicative \(\omega\)-Lie algebras. Then \((L \oplus L^*, [\cdot, \cdot]_d, r_d)\) is a multiplicative \(\omega\)-Lie algebra, where \([\cdot, \cdot]_d\) and \(r_d\) are defined as:
   	   	\begin{align*}
   	   		& [x + a, y + b]_d = [x, y]_L - \mathrm{AD}_2^*(b)(x) + \mathrm{AD}_1^*(a)(y) - \alpha(y)u_r  + [a, b]_{L^*} + \mathrm{ad}_1^*(x)(b) - \mathrm{ad}_2^*(y)(a) - b(x)r,
   	   	\end{align*}
   	   	\[
   	   	r_d(x + a) = r(x) + r^*(a).
   	   	\]
   	   	Let \(x, y, z \in L\) and \(a, b, c \in L^*\). We only need to prove that the bilinear form defined by equation (\ref{123}) is invariant. We have
   	   	\begin{align*}
   	   		& B([x, y]_d, z) = 0 = B(x, [y, z]_d) - 2r_d(y)B(x, z) + r_d(y)B(x, z) + r_d(z)B(x, y). 
   	   	\end{align*}
   	   	Note that
   	   	\begin{align*}
   	   		& B([x, y]_d, c) = B([x, y]_L, c) = \langle c, [x, y]_L \rangle \\
   	   		& = -\langle c, [y, x]_L \rangle - r(x)\langle c, y \rangle + 2r(y)\langle c, x \rangle + r(x)\langle c, y \rangle - 2r(y)\langle c, x \rangle \\
   	   		& = \langle \mathrm{ad}_2^*(y)(c), x \rangle + r(x)\langle c, y \rangle - 2r(y)\langle c, x \rangle,
   	   	\end{align*}
   	   	we have
   	   	\begin{align*}
   	   		& B(x, [y, c]_d) - 2r_d(y)B(x, c) + r_d(c)B(x, y) + r_d(x)B(y, c) \\
   	   		& = B(x, -\mathrm{AD}_2^*(c)(y) + \mathrm{ad}_1^*(y)(c) - c(y)r) - 2r(y)B(x, c) + r^*(c)B(x, y) + r(x)B(y, c) \\
   	   		& = \langle \mathrm{ad}_1^*(y)(c) - c(y)r, x \rangle - 2r(y)\langle c, x \rangle + r(x)\langle c, y \rangle \\
   	   		& = \langle \mathrm{ad}_1^*(y)(c), x \rangle - 2r(y)\langle c, x \rangle + r(x)\langle c, y \rangle = B([x, y]_d, c).
   	   	\end{align*}
   	   	Similarly, a direct computation shows that
   	   	\begin{align*}
   	   		& B([x, b]_d, z) = B(-\mathrm{AD}_2^*(b)(x) + \mathrm{ad}_1^*(x)(b) - b(x)r, z) \\
   	   		& = \langle \mathrm{ad}_1^*(x)(b) - b(x)r, z \rangle  = \langle \mathrm{ad}_2^*(x)(b), z \rangle,
   	   	\end{align*}
   	   	therefore, we obtain	
   	   	\begin{align*}
   	   		& B(x, [b, z]_d) - 2r_d(b)B(x, z) + r_d(z)B(x, b) + r_d(x)B(b, z) \\
   	   		& = B(x, \mathrm{AD}_1^*(b)(z) - b(z)u_r - \mathrm{ad}_2^*(z)(b) + r(z)B(x, b)) + r(x)B(b, z) \\
   	   		& = -\langle \mathrm{ad}_2^*(z)(b), x \rangle + r(z)\langle b, x \rangle + r(x)\langle b, z \rangle \\
   	   		& = -(-b, [z, x]_L + r(z)x + 2r(z)\langle b, x \rangle) + r(z)\langle b, x \rangle + r(x)\langle b, z \rangle \\
   	   		& = \langle b, [z, x]_L \rangle - \langle b, r(z)x \rangle + 2r(x)\langle b, z \rangle  = -\langle b, [x, z]_L \rangle - \langle b, r(z)x \rangle + 2r(x)\langle b, z \rangle \\
   	   		& = \langle \mathrm{ad}_2^*(x)(b), z \rangle = B([x, b]_d, z),
   	   	\end{align*}
   	   	and
   	   	\begin{align*}
   	   		& B([x, b]_d, c) = B(-\mathrm{AD}_2^*(b)(x) + \mathrm{ad}_1^*(x)(b) - b(x)r, c)  = -\langle c, \mathrm{AD}_2^*(b)(x) \rangle. 
   	   	\end{align*}
   	   	It follows that
   	   	\begin{align*}
   	   		& B(x, [b, c]_d) - 2r_d(b)B(x, c) + r_d(c)B(x, b) + r_d(x)B(b, c) \\
   	   		& = B(x, [b, c]_*) - 2r^*(b)B(x, c) + r^*(c)B(x, b) + r(x)B(b, c) \\
   	   		& = \langle [b, c]_*, x^* \rangle - 2r^*(b)\langle c, x \rangle + r^*(c)\langle b, x \rangle + r(x)\langle b, c \rangle \\
   	   		& = -\langle c, \mathrm{AD}_2^*(b)(x) \rangle = B([x, b]_d, c).
   	   	\end{align*}
   	   	In the same way, we have
   	   	\begin{align*}
   	   		& B([a, b]_d, z) = B([a, b]_*, z) = \langle [a, b]_*, z^* \rangle = -\langle [b, a]_*, z^* \rangle \\
   	   		& = -\langle [b, a]_*, z^* \rangle - r^*(a)\langle b, z \rangle + 2r^*(b)\langle a, z \rangle + r^*(a)\langle b, z \rangle - 2r^*(b)\langle a, z \rangle \\
   	   		& = \langle a, \mathrm{AD}_2^*(b)(z) \rangle - 2r^*(b)\langle a, z \rangle + r^*(a)\langle b, z \rangle, 
   	   	\end{align*}
   	   	hence, we conclude that
   	   	\begin{align*}
   	   		& B(a, [b, z]_d) - 2r_d(b)B(a, z) + r_d(z)B(a, b) + r_d(a)B(b, z) \\
   	   		& = B(a, A\mathrm{d}_1^*(b)(z) - b(z)u_r - \mathrm{ad}_2^*(z)(b)) - 2r^*(b)B(a, z) + r^*(a)B(b, z) \\
   	   		& = -\langle a, \mathrm{AD}_1^*(b)(z) - b(z)u_r \rangle - 2r^*(b)\langle a, z \rangle + r^*(a)\langle b, z \rangle \\
   	   		& = \langle a, \mathrm{AD}_2^*(b)(z) \rangle - 2r^*(b)\langle a, z \rangle + r^*(a)\langle b, z \rangle = B([a, b]_d, z).
   	   	\end{align*}
   	   	Therefore, the bilinear form defined by equation (\ref{123}) is invariant. 
   	   	
   	   	Conversely, let \((L \oplus L^*, L, L^*)\) be the Manin triple of the \(\omega\)-Lie algebra associated with the invariant bilinear form \(B\) given by equation (\ref{123}). Then for any \(x, y, z \in L\) and \(a, b, c \in L^*\), by the invariance of \(B\), we have
   	   	\[
   	   	B([x + a, y + b]_d, z + c)
   	   	= B(x + a, [y + b, z + c]_d) -2r_d(y + b)B(x + a, z + c)
   	   	\]
   	   	\[
   	   	 + r_d(x + a)B(y + b, z + c) + r_d(z + c)B(x + a, y + b).
   	   	\]
   	   	Note that
   	   	\begin{align*}
   	   		& B([x, b]_d, z) = -B([b, x]_d, z) \\
   	   		& = -B(b, [x, z]_d) + 2r_d(x)B(b, z) - r_d(z)B(b, x) + r_d(c)B(x, z) \\
   	   		& = -\bigl( B(b, [x, z]_L) + 2r(x)B(b, z) + r(z)B(b, x) \bigr) \\
   	   		& = -\langle b, [x, z]_L \rangle + 2r(x)\langle b, z \rangle + r(z)\langle b, x \rangle \\
   	   		& = -\bigl( \langle b, [x, z]_L + r(z)x \rangle - 2r(x)\langle b, z \rangle \bigr)  = \langle \mathrm{ad}_2^*x(b), z \rangle,
   	   	\end{align*} 
   	   	we get
   	   	\begin{align*}
   	   		& B(-\mathrm{AD}_2^*b(x) + \mathrm{ad}_1^*x(b) - b(x)r, z) = \langle \mathrm{ad}_1^*x(b) - b(x)r, z \rangle\\
   	   		&  = \langle \mathrm{ad}_2^*x(b), z \rangle=B([x, b]_d, z).
   	   	\end{align*}
   	   	Similarly, a direct computation shows that
   	   	\begin{align*}
   	   		& B([x, b]_d, c) = B(x, [b, c]_d) - 2r_d(b)B(x, c) + r_d(c)B(x, b) + r_d(x)B(b, c) \\
   	   		& = B(x, [b, c]_{L^*}) - 2r^*(b)B(x, c) + r^*(c)B(x, b) \\
   	   		& = \langle [b, c]_{L^*}, x \rangle + \langle r^*(c)b, x \rangle - 2r^*(b)\langle c, x \rangle = -\langle c, \mathrm{AD}_2^*b(x) \rangle,
   	   	\end{align*}
   	   	therefore, we obtain
   	   	\begin{align*}
   	   		& B(-\mathrm{AD}_2^*b(x) + \mathrm{ad}_1^*x(b) - b(x)r, c)  = \langle c, -\mathrm{AD}_2^*b(x) \rangle\\
   	   		&  = -\langle c, \mathrm{AD}_2^*b(x) \rangle=B([x, b]_d, c).
   	   	\end{align*}   	   	
   	    We conclude that $[x, b]_d = -\mathrm{AD}_2^*b(x) + \mathrm{ad}_1^*x(b) - b(x)r$. Therefore, the multiplicative \(\omega\)-Lie bracket on \(L \oplus L^*\) is given by $[x + a, y + b]_d = [x, y]_L - \mathrm{AD}_2^*(b)(x) + \mathrm{AD}_1^*(a)(y) - \alpha(y)u_r  + [a, b]_{L^*} + \mathrm{ad}_1^*(x)(b) - \mathrm{ad}_2^*(y)(a) - b(x)r$. By Theorem \ref{31}, $\bigl(L, L^*, (\mathrm{ad}_1^*, \mathrm{ad}_2^*), (\mathrm{AD}_1^*, \mathrm{AD}_2^*)\bigr)$ constitutes a matched pair of multiplicative $\omega$-Lie algebras.
   \end{proof}
   For a multiplicative $\omega$-Lie algebra $(L^*, [\cdot, \cdot]_{L^*}, r^*$), 
   let $\Delta : L \to L \otimes L$ be the dual map of 
  $[\cdot, \cdot]_{L^*} : L^* \otimes L^* \to L^*$, i.e.,
   \begin{align*}
   	\langle \xi \otimes \eta, \Delta(x) \rangle &= \langle [\xi, \eta]_{L^*} + r^*(\eta)\xi - 2r^*(\xi)\eta, x \rangle.
   \end{align*}
   \begin{theorem}\label{36}
   	  Let \((L, [\cdot, \cdot]_L, r)\) and \((L^*, [\cdot, \cdot]_{L^*}, r^*)\) be two multiplicative \(\omega\)-Lie algebras.Then the quadruple \((L, L^*; (ad^*_1, ad^*_2), (AD^*_1, AD^*_2))\) is a matched pair if and only if the following two conditions hold:
   	  \begin{enumerate}
   	  	\item $\Delta([x, y]_L) = (\mathrm{id} \otimes \mathrm{ad}_2 x + \mathrm{ad}_2 x \otimes \mathrm{id})\Delta(y) - (\mathrm{id} \otimes \mathrm{ad}_2 y + \mathrm{ad}_2 y \otimes \mathrm{id})\Delta(x) + 2r(y)\Delta(x) - 2r(x)\Delta(y) \\
   	  	\quad + \bigl(\mathrm{ad}_2(x)(u_r) - 2r(x)u_r\bigr) \otimes y - \bigl(\mathrm{ad}_2(y)(u_r) - 2r(y)u_r\bigr) \otimes x,$
   	  	
   	  	\item $(\xi \otimes \eta) \Delta(z) u_r  = - 2r^*(\xi)\mathrm{AD}_2^*(\eta)(z) - 2r^*(\eta)\mathrm{AD}_1^*(\xi)(z) + 2r^*(\xi)\mathrm{AD}_1^*(\eta)(z) + 2r^*(\eta)\mathrm{AD}_2^*(\xi)(z) \\
   	  	\quad + \xi(\mathrm{AD}_2^*(\eta)(z))u_r - \eta(\mathrm{AD}_2^*(\xi)(z))u_r + r^*(\eta)(\xi)(z) u_r - 2r^*(\xi)(\eta)(z) u_r.$
   	  \end{enumerate}
   	  for all $x, y, z \in L$ and $\xi, \eta \in L^*$, where $u_r \in L$ satisfies
   	  $\langle r^*, a \rangle = \langle a, u_r \rangle, \forall\, a \in L^*$.
   \end{theorem}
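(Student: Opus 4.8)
The plan is to unfold the definition of ``matched pair'' and to rewrite each of its constituent conditions in the language of the co-operation $\Delta$. By the definition of matched pair, together with the fact (recorded above) that the generalized dual representation of the generalized adjoint representation is an Associated Generalized Representation II, the assertion ``$(L,L^*;(\mathrm{ad}^*_1,\mathrm{ad}^*_2),(\mathrm{AD}^*_1,\mathrm{AD}^*_2))$ is a matched pair'' means exactly that the four conditions (1)--(4) of Theorem \ref{31} hold with $\rho_i=\mathrm{ad}^*_i$ acting on $L^*$ and $\rho^*_i=\mathrm{AD}^*_i$ acting on $L$. So I would first make these actions explicit: from $\mathrm{ad}_1 x(y)=[x,y]_L$ and $\mathrm{ad}_2 x(y)=[x,y]_L+r(y)x$ one has
\[
\langle\mathrm{ad}^*_1(x)(\xi),y\rangle=-\langle\xi,[x,y]_L\rangle+2r(x)\langle\xi,y\rangle,\qquad
\langle\mathrm{ad}^*_2(x)(\xi),y\rangle=-\langle\xi,[x,y]_L\rangle-r(y)\langle\xi,x\rangle+2r(x)\langle\xi,y\rangle,
\]
and symmetrically for $\mathrm{AD}^*_1,\mathrm{AD}^*_2$ on $L$ with $r,[\cdot,\cdot]_L$ replaced by $r^*,[\cdot,\cdot]_{L^*}$.

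Next I would record the bookkeeping identities that carry the translation. Comparing the display defining $\Delta$ with the explicit form of $\mathrm{AD}^*_2$ gives, for $w\in L$ and $\xi,\eta\in L^*$,
\[
\langle\mathrm{AD}^*_2(\xi)(w),\eta\rangle=-\langle\xi\otimes\eta,\Delta(w)\rangle,\qquad
\langle[\xi,\eta]_{L^*},w\rangle=\langle\xi\otimes\eta,\Delta(w)\rangle-r^*(\eta)\langle\xi,w\rangle+2r^*(\xi)\langle\eta,w\rangle,
\]
while the bracket-in-$L$ terms are handled via $[z,w]_L=\mathrm{ad}_2 z(w)-r(w)z$ and the formulas of the previous paragraph. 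I would then process conditions (1)--(4) of Theorem \ref{31} one at a time. Conditions (1) and (2), which are equations in $L$, are tested against an arbitrary $\eta\in L^*$. For (1), writing $\omega=\xi$, the left-hand side $\langle\mathrm{AD}^*_2(\xi)([x,y]_L),\eta\rangle$ becomes $-\langle\xi\otimes\eta,\Delta([x,y]_L)\rangle$; each bracket-in-$L$ term on the right becomes, after the substitutions, one slot of the coproduct-derivation operators $\mathrm{id}\otimes\mathrm{ad}_2 x+\mathrm{ad}_2 x\otimes\mathrm{id}$ and $\mathrm{id}\otimes\mathrm{ad}_2 y+\mathrm{ad}_2 y\otimes\mathrm{id}$ applied to $\Delta(y)$ and $\Delta(x)$; and the $r$- and $u_r$-weighted terms become $2r(y)\Delta(x)-2r(x)\Delta(y)$ and $\bigl(\mathrm{ad}_2(x)(u_r)-2r(x)u_r\bigr)\otimes y-\bigl(\mathrm{ad}_2(y)(u_r)-2r(y)u_r\bigr)\otimes x$. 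Since $\xi,\eta$ are arbitrary, this is precisely condition (1) of Theorem \ref{36}. For (2), substituting $[\eta,\xi]_{L^*}(z)=-\langle\xi\otimes\eta,\Delta(z)\rangle+r^*(\eta)\langle\xi,z\rangle-2r^*(\xi)\langle\eta,z\rangle$ into the left-hand side and collecting the $\Delta$-term there reproduces condition (2) of Theorem \ref{36} verbatim. Finally, conditions (3) and (4) are the $L\leftrightarrow L^*$ mirror images of (1) and (2): because $\mathrm{ad}^*_\bullet$ and $\mathrm{AD}^*_\bullet$ are mutually dual for the pairing $B$ of \eqref{123}, (3) is equivalent to (1) and (4) to (2); equivalently, one repeats the computation above with $z$ tested against an arbitrary $w\in L$ and lands on conditions (1)--(2) of Theorem \ref{36} again. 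Hence the four conditions of Theorem \ref{31} collapse to the two in Theorem \ref{36}.

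I expect the main obstacle to be organisational rather than conceptual: tracking the numerous $r$-, $r^*$-, and $u_r$-weighted correction terms through each dualization without a sign or factor slipping, and in particular verifying that the $u_r$-terms $(\rho_2(x)(\omega))(y)u_r-(\rho_2(y)(\omega))(x)u_r$ together with the $r(\rho^*_2(\omega)(\cdot))(\cdot)$-terms of Theorem \ref{31}(1) really do reassemble, after pairing with $\eta$, into the summands $\bigl(\mathrm{ad}_2(x)(u_r)-2r(x)u_r\bigr)\otimes y-\bigl(\mathrm{ad}_2(y)(u_r)-2r(y)u_r\bigr)\otimes x$ and $2r(y)\Delta(x)-2r(x)\Delta(y)$ of Theorem \ref{36}(1), with nothing left over. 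The only other point deserving care is the reduction of (3) and (4) to (1) and (2): the slick argument is that these are literal transposes under $B$, but to be safe one should also be prepared to expand (3) and (4) directly and check that the computation is the verbatim mirror of the one for (1) and (2). Once these checks are done, the stated equivalence is immediate.
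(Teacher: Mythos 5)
Your proposal follows essentially the same route as the paper's proof: specialize the four conditions of Theorem \ref{31} to $(\mathrm{ad}^*_1,\mathrm{ad}^*_2)$ and $(\mathrm{AD}^*_1,\mathrm{AD}^*_2)$, pair against arbitrary test elements, and use the identity $\langle\mathrm{AD}^*_2(\xi)(w),\eta\rangle=-\langle\xi\otimes\eta,\Delta(w)\rangle$ to translate everything into the $\Delta$-language, which is exactly the computation carried out in the paper. If anything you are more explicit than the paper on one point: the paper silently records only two of the four conditions of Theorem \ref{31} before translating, whereas you correctly flag that conditions (3) and (4) must be shown to collapse onto (1) and (2) by the $L\leftrightarrow L^*$ duality of the pairing, which is the right justification for that reduction.
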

   \begin{proof}
   	By Theorem \ref{31}, \((L, L^*; (\mathrm{ad}_1^*, \mathrm{ad}_2^*), (\mathrm{AD}_1^*, \mathrm{AD}_2^*))\) is a matched pair of multiplicative \(\omega\)-Lie algebras if and only if
   	\begin{align*}
   		&\mathrm{AD}_2^*(a)([x, y]_L) = [\mathrm{AD}_2^*(a)(x), y]_L + [x, \mathrm{AD}_2^*(a)(y)]_L + \mathrm{AD}_1^*\bigl(\mathrm{ad}_2^*(y)(a)\bigr)(x) - \mathrm{AD}_1^*\bigl(\mathrm{ad}_2^*(x)(a)\bigr)(y) \\
   		&\quad + \bigl(\mathrm{ad}_2^*(x)(a)\bigr)(y)u_r - \bigl(\mathrm{ad}_2^*(y)(a)\bigr)(x)u_r + r\bigl(\mathrm{AD}_2^*(a)(y)\bigr)(x) - r\bigl(\mathrm{AD}_2^*(a)(x)\bigr)(y) \\
   		&\quad + r^*\bigl(\mathrm{ad}_2^*(x)(a)\bigr)(y) - r^*\bigl(\mathrm{ad}_2^*(y)(a)\bigr)(x), \\
   		&([\xi, \eta]_{L^*})(z)u_r = -2r^*(\xi)\mathrm{AD}_2^*(\eta)(z) - 2r^*(\eta)\mathrm{AD}_1^*(\xi)(z) + 2r^*(\xi)\mathrm{AD}_1^*(\eta)(z) + 2r^*(\eta)\mathrm{AD}_2^*(\xi)(z) \\
   		&\quad + \xi\bigl(\mathrm{AD}_2^*(\eta)(z)\bigr)u_r - \eta\bigl(\mathrm{AD}_2^*(\xi)(z)\bigr)u_r.
   	\end{align*}
   	Then we get
   	\begin{align*}
   		0 &= \bigl\langle -\mathrm{AD}_2^*(a)([x, y]_L) + [\mathrm{AD}_2^*(a)(x), y]_L - r(\mathrm{AD}_2^*(a)(x))(y) + [x, \mathrm{AD}_2^*(a)(y)]_L + r(\mathrm{AD}_2^*(a)(y))(x)  \\
   		& \quad + \mathrm{AD}_1^*(\mathrm{ad}_2^*(y)(a))(x) - (\mathrm{ad}_2^*(y)(a))(x)u_r - \mathrm{AD}_1^*(\mathrm{ad}_2^*(x)(a))(y) + (\mathrm{ad}_2^*(x)(a))(y) u_r + r^*\bigl(\mathrm{ad}_2^*(x)(a)\bigr)y \\
   		& \quad - r^*\bigl(\mathrm{ad}_2^*(y)(a)\bigr)x,\ b \bigr\rangle \\
   		&= \langle [x, y]_L,\ [a, b]_{L^*} + r^*(b)a - 2r^*(a)b \rangle \\
   		&\quad - \bigl\langle [y, \mathrm{AD}_2^*(a)(x)]_L + r(\mathrm{AD}_2^*(a)(x))y - 2r(y)\mathrm{AD}_2^*(a)(x),\ b \bigr\rangle - 2r(y)\langle \mathrm{AD}_2^*(a)(x),\ b \rangle \\
   		&\quad + \bigl\langle [x, \mathrm{AD}_2^*(a)(y)]_L + r(\mathrm{AD}_2^*(a)(y))x - 2r(x)\mathrm{AD}_2^*(a)(y),\ b \bigr\rangle + 2r(x)\langle \mathrm{AD}_2^*(a)(y),\ b \rangle \\
   		&\quad + \bigl\langle \mathrm{AD}_2^*\bigl(\mathrm{ad}_2^*(y)(a)\bigr)(x),\ b \bigr\rangle - \bigl\langle \mathrm{AD}_2^*\bigl(\mathrm{ad}_2^*(x)(a)\bigr)(y),\ b \bigr\rangle + \langle \mathrm{ad}_2^*(x)(a)(u_r)(y), b \rangle - \langle \mathrm{ad}_2^*(y)(a)(u_r)(x), b \rangle \\
   		&= \langle [x, y]_L,\ [a, b]_{L^*} + r^*(b)a - 2r^*(a)b \rangle - \bigl\langle x,\ [a, \mathrm{ad}_2^*(y)(b)]_{L^*} + r^*(\mathrm{ad}_2^*(y)(b))a - 2r^*(a)\mathrm{ad}_2^*(y)(b) \bigr\rangle \\
   		&\quad - 2r(y)\langle x,\ [a, b]_{L^*} + r^*(b)a - 2r^*(a)b \rangle + \bigl\langle y,\ [a, \mathrm{ad}_2^*(x)(b)]_{L^*} + r^*(\mathrm{ad}_2^*(x)(b))a - 2r^*(a)\mathrm{ad}_2^*(x)(b) \bigr\rangle \\
   		&\quad + 2r(x)\langle y,\ [a, b]_{L^*} + r^*(b)a - 2r^*(a)b \rangle - \bigl\langle x,\ [\mathrm{ad}_2^*(y)(a), b]_{L^*} + r^*(b)\mathrm{ad}_2^*(y)(a) - 2r^*(\mathrm{ad}_2^*(y)(a))b \bigr\rangle \\
   		&\quad + \bigl\langle y,\ [\mathrm{ad}_2^*(x)(a), b]_{L^*} + r^*(b)\mathrm{ad}_2^*(x)(a) - 2r^*(\mathrm{ad}_2^*(x)(a))b \bigr\rangle +\left( -\langle \mathrm{ad}_2(x)(u_r), a \rangle + 2r(x)\langle u_r, a \rangle \right) \langle y, b \rangle \\
   		& \quad -\left( -\langle \mathrm{ad}_2(y)(u_r), a \rangle + 2r(y)\langle u_r, a \rangle \right) \langle x, b \rangle \\
   		&= \langle \Delta([x, y]_L),\ a \otimes b \rangle - \langle \Delta(x),\ a \otimes \mathrm{ad}_2^*(y)(b) \rangle - 2r(y)\langle \Delta(x),\ a \otimes b \rangle + \langle \Delta(y),\ a \otimes \mathrm{ad}_2^*(x)(b) \rangle \\
   		&\quad + 2r(x)\langle \Delta(y),\ a \otimes b \rangle - \langle \Delta(x),\ \mathrm{ad}_2^*(y)(a) \otimes b \rangle + \langle \Delta(y),\ \mathrm{ad}_2^*(x)(a) \otimes b \rangle -\langle \mathrm{ad}_2(x)(u_r), a \rangle \langle y, b \rangle \\
   		& \quad + 2r(x)\langle u_r, a \rangle \langle y, b \rangle
   		+\langle \mathrm{ad}_2(y)(u_r), a \rangle \langle x, b \rangle - 2r(y)\langle u_r, a \rangle \langle x, b \rangle \\
   		&= \langle \Delta([x, y]_L),\ a \otimes b \rangle - \langle \Delta(x),\ (\mathrm{id} \otimes \mathrm{ad}_2^*(y))(a \otimes b) \rangle - 2r(y)\langle \Delta(x),\ a \otimes b \rangle + \langle \Delta(y),\ (\mathrm{id} \otimes \mathrm{ad}_2^*(x))(a \otimes b) \rangle \\
   		&\quad  + 2r(x)\langle \Delta(y),\ a \otimes b \rangle - \langle \Delta(x),\ (\mathrm{ad}_2^*(y) \otimes \mathrm{id})(a \otimes b) \rangle + \langle \Delta(y),\ (\mathrm{ad}_2^*(x) \otimes \mathrm{id})(a \otimes b) \rangle \\
   		& \quad + \langle \bigl(-\mathrm{ad}_2(x)(u_r) + 2r(x)u_r\bigr) \otimes y,\ a \otimes b \rangle
   		+ \langle \bigl(\mathrm{ad}_2(y)(u_r) - 2r(y)u_r\bigr) \otimes x,\ a \otimes b \rangle \\
   		&= \langle \Delta([x, y]_L),\ a \otimes b \rangle + \langle (\mathrm{id} \otimes \mathrm{ad}_2(y))\Delta(x),\ a \otimes b \rangle - \langle (\mathrm{id} \otimes \mathrm{ad}_2(x))\Delta(y),\ a \otimes b \rangle \\
   		&\quad + \langle (\mathrm{ad}_2(y) \otimes \mathrm{id})\Delta(x),\ a \otimes b \rangle - 2r(y)\langle \Delta(x),\ a \otimes b \rangle - \langle (\mathrm{ad}_2(x) \otimes \mathrm{id})\Delta(y),\ a \otimes b \rangle + 2r(x)\langle \Delta(y),\ a \otimes b \rangle \\
   		& \quad + \langle \bigl(-\mathrm{ad}_2(x)(u_r) + 2r(x)u_r\bigr) \otimes y,\ a \otimes b \rangle
   		+ \langle \bigl(\mathrm{ad}_2(y)(u_r) - 2r(y)u_r\bigr) \otimes x,\ a \otimes b \rangle.
   	\end{align*}
   	which is exactly 
   	\[
   	\Delta([x, y]_L) = (\mathrm{id} \otimes \mathrm{ad}_2 x + \mathrm{ad}_2 x \otimes \mathrm{id})\Delta(y) - (\mathrm{id} \otimes \mathrm{ad}_2 y + \mathrm{ad}_2 y \otimes \mathrm{id})\Delta(x) + 2r(y)\Delta(x) - 2r(x)\Delta(y)
   	\]
   	\[
   	+ \bigl(\mathrm{ad}_2(x)(u_r) - 2r(x)u_r\bigr) \otimes y - \bigl(\mathrm{ad}_2(y)(u_r) - 2r(y)u_r\bigr) \otimes x.
   	\]
   	We have
   	$([\xi, \eta]_{L^*})(z) u_r  = \left(\left([\xi, \eta]_{L^*}(z) + r^*(\eta)(\xi)(z) - 2r^*(\xi)(\eta)\right)(z) - r^*(\eta)(\xi)(z) +2r^*(\xi)(\eta)(z)\right) u_r$ 
   	\[= (\xi \otimes \eta) \Delta(z) u_r - r^*(\eta)(\xi)(z) u_r + 2r^*(\xi)(\eta)(z) u_r.\]
   	Hence we get
   	\[
   	(\xi \otimes \eta) \Delta(z) u_r - r^*(\eta)(\xi)(z) u_r + 2r^*(\xi)(\eta)(z) u_r  = - 2r^*(\xi)\mathrm{AD}_2^*(\eta)(z) - 2r^*(\eta)\mathrm{AD}_1^*(\xi)(z)
   	\]
   	\[+ 2r^*(\xi)\mathrm{AD}_1^*(\eta)(z) + 2r^*(\eta)\mathrm{AD}_2^*(\xi)(z)  + \xi(\mathrm{AD}_2^*(\eta)(z))u_r - \eta(\mathrm{AD}_2^*(\xi)(z))u_r. \]
   	That is
   	$(\xi \otimes \eta) \Delta(z) u_r  = - 2r^*(\xi)\mathrm{AD}_2^*(\eta)(z) - 2r^*(\eta)\mathrm{AD}_1^*(\xi)(z) + 2r^*(\xi)\mathrm{AD}_1^*(\eta)(z)+ 2r^*(\eta)\mathrm{AD}_2^*(\xi)(z)$
   	\[
   	+ \xi(\mathrm{AD}_2^*(\eta)(z))u_r - \eta(\mathrm{AD}_2^*(\xi)(z))u_r  +  r^*(\eta)(\xi)(z) u_r - 2r^*(\xi)(\eta)(z) u_r.\]
   	The proof is complete.
   \end{proof}
   \begin{definition}\label{37}
       A pair of multiplicative \(\omega\)-Lie algebras \((L, [\cdot, \cdot]_L, r)\) and \((L^*, [\cdot, \cdot]_{L^*}, r^*)\) with \(\Delta\) given by
       \[
       \langle \Delta(x), \xi \otimes \eta \rangle = \langle [\xi, \eta]_{L^*} + r^*(\eta)\xi - 2r^*(\xi)\eta, x \rangle
       \]
       is called a multiplicative \(\omega\)-Lie bialgebra if the following conditions hold:
       \begin{enumerate}
       	\item $\Delta([x, y]_L) = (\mathrm{id} \otimes \mathrm{ad}_2 x + \mathrm{ad}_2 x \otimes \mathrm{id})\Delta(y) - (\mathrm{id} \otimes \mathrm{ad}_2 y + \mathrm{ad}_2 y \otimes \mathrm{id})\Delta(x) + 2r(y)\Delta(x) - 2r(x)\Delta(y) \\
       	\quad + \bigl(\mathrm{ad}_2(x)(u_r) - 2r(x)u_r\bigr) \otimes y - \bigl(\mathrm{ad}_2(y)(u_r) - 2r(y)u_r\bigr) \otimes x,$
       	
       	\item $(\xi \otimes \eta) \Delta(z) u_r  = - 2r^*(\xi)\mathrm{AD}_2^*(\eta)(z) - 2r^*(\eta)\mathrm{AD}_1^*(\xi)(z) + 2r^*(\xi)\mathrm{AD}_1^*(\eta)(z) + 2r^*(\eta)\mathrm{AD}_2^*(\xi)(z) \\
       	\quad + \xi(\mathrm{AD}_2^*(\eta)(z))u_r - \eta(\mathrm{AD}_2^*(\xi)(z))u_r + r^*(\eta)(\xi)(z) u_r - 2r^*(\xi)(\eta)(z) u_r.$
       \end{enumerate}
   \end{definition}
   
   From Theorems \ref{35} and \ref{36}, we obtain the following conclusion.
   \begin{theorem}\label{38}
       Let \((L, [\cdot, \cdot]_L, r)\) and \((L^*, [\cdot, \cdot]_{L^*}, r^*)\) be two multiplicative \(\omega\)-Lie algebras. Then the following conditions are equivalent:
       \begin{enumerate}
       	\item \((L, L^*)\) is a multiplicative \(\omega\)-Lie bialgebra.
       	\item \((L, L^*; (ad^*_1, ad^*_2), (AD^*_1, AD^*_2))\) is a matched pair of multiplicative \(\omega\)-Lie algebras.
       	\item \((L \oplus L^*; L, L^*)\) is a Manin triple of multiplicative \(\omega\)-Lie algebras associated to the invariant bilinear form given by Eq. (\ref{123}).
       \end{enumerate}
   \end{theorem}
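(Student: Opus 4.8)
The plan is to obtain the three equivalences by simply chaining together the two equivalences already established in Theorems \ref{35} and \ref{36}, since the entire computational substance has been dealt with there.

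First I would observe that (1) $\Leftrightarrow$ (2) is essentially a restatement. Definition \ref{37} declares $(L, L^*)$ to be a multiplicative $\omega$-Lie bialgebra exactly when its two displayed conditions hold, and these are literally the two conditions appearing in the statement of Theorem \ref{36}, which characterizes precisely when $(L, L^*; (\mathrm{ad}^*_1, \mathrm{ad}^*_2), (\mathrm{AD}^*_1, \mathrm{AD}^*_2))$ is a matched pair of multiplicative $\omega$-Lie algebras. The only thing to check is that the comultiplication $\Delta$ used in Definition \ref{37} coincides with the $r^*$-twisted dual of $[\cdot,\cdot]_{L^*}$ used in Theorem \ref{36} — which holds by construction, since both are given by $\langle \Delta(x), \xi \otimes \eta \rangle = \langle [\xi,\eta]_{L^*} + r^*(\eta)\xi - 2r^*(\xi)\eta, x\rangle$ — and that the element $u_r \in L$ is pinned down by the same pairing identity $\langle r^*, a\rangle = \langle a, u_r\rangle$ in both places.

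For (2) $\Leftrightarrow$ (3), I would invoke Theorem \ref{35} directly: it asserts that $(L\oplus L^*; L, L^*)$ together with the bilinear form of (\ref{123}) is a Manin triple of multiplicative $\omega$-Lie algebras if and only if $(L, L^*; (\mathrm{ad}^*_1, \mathrm{ad}^*_2), (\mathrm{AD}^*_1, \mathrm{AD}^*_2))$ is a matched pair. Combining this with the previous paragraph yields the cyclic chain (1) $\Leftrightarrow$ (2) $\Leftrightarrow$ (3), which is the assertion.

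There is no genuine obstacle here beyond bookkeeping, and that is the one point deserving care: one should confirm that, once the representations are specialized to $(\rho_1,\rho_2) = (\mathrm{ad}_1,\mathrm{ad}_2)$ acting on $L^*$ and $(\rho^*_1,\rho^*_2) = (\mathrm{AD}^*_1,\mathrm{AD}^*_2)$ acting on $L$, the bracket $[\cdot,\cdot]_{L\oplus L^*}$ defined just before Theorem \ref{31} is exactly the bracket $[\cdot,\cdot]_d$ used in the proofs of Theorems \ref{35} and \ref{36}, and that the linear form $r_{L\oplus L^*}$ agrees with $r_d$. With those identifications in hand, no further computation is needed and the result follows purely by assembling the two previously proved equivalences.
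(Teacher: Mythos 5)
Your proposal matches the paper's own argument exactly: the paper derives Theorem \ref{38} precisely by combining Theorem \ref{35} (Manin triple $\Leftrightarrow$ matched pair) with Theorem \ref{36} (matched pair $\Leftrightarrow$ the two conditions of Definition \ref{37}), with the bialgebra conditions built into Definition \ref{37} so that (1) $\Leftrightarrow$ (2) is a restatement. Your added bookkeeping remarks about matching $\Delta$, $u_r$, and the bracket $[\cdot,\cdot]_d$ are consistent with, and slightly more explicit than, what the paper records.
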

   When multiplicative $\omega$-Lie algebras degenerate to Lie algebras, i.e.,$r=0$, our definitions and equivalence theorems will degenerate to the classical results on Lie algebras.
   
   \section{Yang-Baxter $\omega$-Lie Bialgebras and the $\omega$-Yang-Baxter Equation}\label{4}
   In this section, we introduce the notions of Yang--Baxter $\omega$-Lie bialgebras and the $\omega$-Yang--Baxter equation. We then study their relationship.
   
   \begin{definition}\label{411}
   	   Let $(L, [\cdot, \cdot]_L, r)$ be a multiplicative $\omega$-Lie algebra. Take an element $R = \sum\limits_i x_i \otimes y_i \in L \otimes L$, where the elements $x_i, y_i$ satisfy the following conditions:
   	   \begin{enumerate}
   	   	\item $x_i, y_i \in \ker r$,
   	   	\item $r \circ \mathrm{ad}_1 x_i = r \circ \mathrm{ad}_1 y_i = 0$.
   	   \end{enumerate}
   	   Fix an element $u_r \in C(L)$. Define $[R, R]_L \in L^{\otimes 3}$ by
   	   \[
   	   [R, R]_L := \sum\limits_{i,j} \left( [x_i, x_j]_L \otimes y_i \otimes y_j + x_i \otimes [y_i, x_j]_L \otimes y_j + x_i \otimes x_j \otimes [y_i, y_j]_L \right)
   	   \]
   	   \[
   	   + 3\sum\limits_i \left( y_i \otimes x_i \otimes u_r + x_i \otimes u_r \otimes y_i + u_r \otimes y_i \otimes x_i \right).
   	   \]
   	   The equation $[R, R]_L = 0$ is called the $\omega$-Yang--Baxter equation. 
   \end{definition}
   When the multiplicative $\omega$-Lie algebra degenerates to a Lie algebra, the $\omega$-Yang-Baxter equation reduces to the classical Yang-Baxter equation.
   
   Fix an element $ u_r \in C(L) $. Let $ R = \sum_i x_i \otimes y_i \in L \otimes L $ be an element satisfying the conditions in Definition~\ref{411}. Define the linear map $\Delta : L \to L \otimes L$ by
   \[
   \Delta(x) = \mathrm{ad}_x R = [x, R]_L - 2x \otimes u_r + u_r \otimes x,\tag{4.1}\label{41}
   \]
   where 
   \[[x, R]_L = (\mathrm{ad_1}x \otimes \mathrm{id} + \mathrm{id} \otimes \mathrm{ad_1}x)(R).
   \]
   
   and linear map $\mathrm{Jac}_\Delta(x) \colon L \to L \otimes L$ by
   \[
   \mathrm{Jac}_\Delta(x) = \sum\limits_{c, p} (\mathrm{id} \otimes \Delta)\Delta(x) + 2\sigma(\Delta(x)) \otimes u_r + u_r \otimes \Delta(x) + 2u_r \otimes u_r \otimes x,
   \]
   where ``$\sum\limits_{c,p}$ " means that the sum is taken of the term after the summation sign and together with the two similar terms obtained by cyclic permutations of the factors in the tensor product $\otimes^3 L$, and $\sigma$ is the swap operator acting on the tensor space.
   \begin{lemma}\label{142}
   	   Under the above notation, if $[x, R + \sigma(R)]_L = 0$ for all $x \in L$, then
   	  \[
   	  \mathrm{Jac}_\Delta(x) = \mathrm{ad}_x[R, R]_L := (\mathrm{ad}_1 x \otimes \mathrm{id} \otimes \mathrm{id} + \mathrm{id} \otimes \mathrm{ad}_1 x \otimes \mathrm{id} + \mathrm{id} \otimes \mathrm{id} \otimes \mathrm{ad}_1 x)[R, R]_L.
   	  \]
   \end{lemma}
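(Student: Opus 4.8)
The plan is to expand $\mathrm{Jac}_\Delta(x)$ completely, reduce every iterated bracket by the $\omega$-Jacobi identity, and then match the outcome term by term against the explicit expression for $\mathrm{ad}_x[R,R]_L$ read off from Definition~\ref{411}. Two preliminary reductions organise the computation. First, since $u_r\in C(L)$ we have $\mathrm{ad}_1 u_r=0$ and $[x,u_r]_L=0$, so $\Delta(u_r)=[u_r,R]_L-2u_r\otimes u_r+u_r\otimes u_r=-u_r\otimes u_r$; this value is needed to control the $u_r$-quadratic terms. Second, conditions (I)--(II) on $R$ say that $r(x_i)=r(y_i)=0$ and $r([x_i,z]_L)=r([y_i,z]_L)=0$ for every $z\in L$; consequently, whenever the $\omega$-Jacobi identity is applied to a triple at least one of whose entries is a leg $x_i$ or $y_i$ of $R$, all three $r(\cdot)$-correction terms on its right-hand side vanish. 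This is the only point where (I)--(II) intervene, and it is what lets the ``$R$-cubic'' part of the calculation collapse onto the classical Lie-algebra manipulation.

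First I would expand $(\mathrm{id}\otimes\Delta)\Delta(x)$ by applying $\mathrm{id}\otimes\Delta$ to the four summands of $\Delta(x)=\sum_i\bigl([x,x_i]_L\otimes y_i+x_i\otimes[x,y_i]_L\bigr)-2x\otimes u_r+u_r\otimes x$. This yields three kinds of contributions: an $R$-cubic part $\sum_{i,j}\bigl([x,x_i]_L\otimes[y_i,x_j]_L\otimes y_j+[x,x_i]_L\otimes x_j\otimes[y_i,y_j]_L+x_i\otimes[[x,y_i]_L,x_j]_L\otimes y_j+x_i\otimes x_j\otimes[[x,y_i]_L,y_j]_L\bigr)$; a family of $u_r$-linear terms coming from the $-2(\cdot)\otimes u_r$ and $u_r\otimes(\cdot)$ tails of $\Delta(y_i)$ and $\Delta([x,y_i]_L)$ and from the summand $u_r\otimes\Delta(x)$; and the $u_r$-quadratic terms $2x\otimes u_r\otimes u_r-2u_r\otimes x\otimes u_r+u_r\otimes u_r\otimes x$ produced via $\Delta(u_r)=-u_r\otimes u_r$. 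In the $R$-cubic part I would rewrite the double brackets $[[x,y_i]_L,x_j]_L$ and $[[x,y_i]_L,y_j]_L$ by the $\omega$-Jacobi identity; by the preliminary reduction their $r(\cdot)$-corrections drop out, so from here on the manipulation is exactly the one used for ordinary Lie bialgebras.

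Next I would symmetrise: sum over the three cyclic permutations of the tensor factors, adjoining the cyclic permutations of $2\sigma(\Delta(x))\otimes u_r$, $u_r\otimes\Delta(x)$ and $2u_r\otimes u_r\otimes x$. Here the hypothesis $[x,R+\sigma(R)]_L=0$ enters: the $R$-cubic terms in which $\mathrm{ad}_1 x$ hits one leg while the other two legs carry $R$ in the order opposite to the one occurring in $[R,R]_L$ combine, after a cyclic rotation followed by the transposition of two tensor slots, into $\mathrm{ad}_1 x$ (acting in a single slot) applied to a tensor built from $R+\sigma(R)$, and hence vanish. What survives of the $R$-cubic part is precisely the $R$-cubic part of $\mathrm{ad}_x[R,R]_L=(\mathrm{ad}_1 x\otimes\mathrm{id}\otimes\mathrm{id}+\mathrm{id}\otimes\mathrm{ad}_1 x\otimes\mathrm{id}+\mathrm{id}\otimes\mathrm{id}\otimes\mathrm{ad}_1 x)[R,R]_L$. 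For the $u_r$-terms: after cyclic summation the $u_r$-quadratic contributions cancel identically, while the $u_r$-linear contributions — using $[x,u_r]_L=0$, so that $\mathrm{ad}_1 x$ annihilates every $u_r$-leg — add up, after cyclic rotation, to $(\mathrm{ad}_1 x\otimes\mathrm{id}\otimes\mathrm{id}+\mathrm{id}\otimes\mathrm{ad}_1 x\otimes\mathrm{id}+\mathrm{id}\otimes\mathrm{id}\otimes\mathrm{ad}_1 x)$ applied to $3\sum_i\bigl(y_i\otimes x_i\otimes u_r+x_i\otimes u_r\otimes y_i+u_r\otimes y_i\otimes x_i\bigr)$, which is exactly the $u_r$-part of $[R,R]_L$. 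Combining the two matches gives $\mathrm{Jac}_\Delta(x)=\mathrm{ad}_x[R,R]_L$.

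The main obstacle I anticipate is organisational rather than conceptual: there are three distinct sources of $u_r$-terms (those built into $\Delta$, those added explicitly in $\mathrm{Jac}_\Delta$, and those present inside $[R,R]_L$), and one must carry all of them consistently through the cyclic symmetrisation while checking that every $\omega$-Jacobi correction is killed by conditions (I)--(II) and that the $u_r$-quadratic contributions really cancel. Once the classical Lie-bialgebra skeleton is isolated, the $\omega$-twist contributes only the controlled $u_r$-corrections that the definitions of $\Delta$, $\mathrm{Jac}_\Delta$ and $[R,R]_L$ were arranged to absorb.
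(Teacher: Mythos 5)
Your proposal is essentially the paper's own proof: both proceed by brute-force expansion of $\mathrm{Jac}_\Delta(x)$ over $R=\sum_i x_i\otimes y_i$, repeated use of the hypothesis $[x,R+\sigma(R)]_L=0$ together with the $\omega$-Jacobi identity (whose $r$-corrections are killed by conditions (I)--(II) of Definition~\ref{411}), and term-by-term matching against $\mathrm{ad}_x[R,R]_L$, differing only in bookkeeping (you organise by $R$-cubic / $u_r$-linear / $u_r$-quadratic parts, the paper cancels common terms and shows the residue $\sum_i(x_i\otimes u_i+y_i\otimes v_i)$ vanishes). One small correction: the $r$-correction terms of the $\omega$-Jacobi identity vanish when at least \emph{two} of the three entries are legs of $R$ (so that every pairwise bracket contains a leg annihilated by $r\circ\mathrm{ad}_1$), not ``at least one'' as you state — but since every Jacobi application in this computation does involve two legs, the argument is unaffected.
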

   \begin{proof}
   	Let $R = \sum\limits_i x_i \otimes y_i$. For any $x \in L$, the condition $[x, R + \sigma(R)]_L = 0$ is equivalent to
   	\[
   	\sum\limits_i \big( [x, x_i]_L \otimes y_i + x_i \otimes [x, y_i]_L \big) = \sum\limits_i \big( -[x, y_i]_L \otimes x_i - y_i \otimes [x, x_i]_L \big)\tag{4.2}\label{422}.
   	\]
   	By explicitly expanding $\mathrm{Jac}_\Delta(x)$ using the expression $R = \sum\limits_i x_i \otimes y_i$, with summation over repeated indices implied, we obtain
   	\[
   	\mathrm{Jac}_\Delta(x)
   	\]
   	\[
   	= \sum\limits_i \sum\limits_j [x, x_i]_L \otimes [y_i, x_j]_L \otimes y_j + \sum\limits_i \sum\limits_j [x, x_i]_L \otimes x_j \otimes [y_i, y_j]_L+ \sum\limits_i \sum\limits_j x_i \otimes [[x, y_i]_L, x_j]_L \otimes y_j
   	\]
   	\[
   	+ \sum\limits_i \sum\limits_j x_i \otimes x_j \otimes [[x, y_i]_L, y_j]_L+ \sum\limits_i \sum\limits_j [y_i, x_j]_L \otimes y_j \otimes [x, x_i]_L + \sum\limits_i \sum\limits_j x_j \otimes [y_i, y_j]_L \otimes [x, x_i]_L
   	\]
   	\[
   	+ \sum\limits_i \sum\limits_j [[x, y_i]_L, x_j]_L \otimes y_j \otimes x_i + \sum\limits_i \sum\limits_j x_j \otimes [[x, y_i]_L, y_j]_L \otimes x_i+ \sum\limits_i \sum\limits_j y_j \otimes [x, x_i]_L \otimes [y_i, x_j]_L 
   	\]
   	\[
   	+ \sum\limits_i \sum\limits_j [y_i, y_j]_L \otimes [x, x_i]_L \otimes x_j+ \sum\limits_i \sum\limits_j y_j \otimes x_i \otimes [[x, y_i]_L, x_j]_L + \sum\limits_i \sum\limits_j [[x, y_i]_L, y_j]_L \otimes x_i \otimes x_j
   	\]
   	\[
   	+ 3\sum\limits_i y_i \otimes [x, x_i]_L \otimes u_r + 3\sum\limits_i [x, y_i]_L \otimes x_i \otimes u_r+ 3\sum\limits_i [x, x_i]_L \otimes u_r \otimes y_i
   	\]
   	\[
   	+ 3\sum\limits_i x_i \otimes u_r \otimes [x, y_i]_L+ 3\sum\limits_i u_r \otimes y_i \otimes [x, x_i]_L + 3\sum\limits_i u_r \otimes [x, y_i]_L \otimes x_i.
   	\]
   	
   	Then we write out $\mathrm{Jac}_\Delta(x) - \mathrm{ad}_x[R, R]_L$ explicitly. Note that $\mathrm{Jac}_\Delta(x)$ is a sum of eighteen terms and $\mathrm{ad}_x[R, R]_L$ is a sum of fifteen terms, but eight terms appear in both expressions and hence are canceled. Thus, $\mathrm{Jac}_\Delta(x) - \mathrm{ad}_x[R, R]_L$ consists of seventeen remaining terms. After suitably rearranging these terms, we obtain
   	\begin{align*}
   		& \mathrm{Jac}_{\Delta}(x) - \mathrm{ad}_x[R, R]_L \\
   		= &\, [y_i, x_j]_L \otimes y_j \otimes [x, x_i]_L - [x_i, x_j]_L \otimes y_i \otimes [x, y_j]_L + [[x, y_i]_L, y_j]_L \otimes x_i \otimes x_j + [y_i, y_j]_L \otimes [x, x_i]_L \otimes x_j \\
   		& - [x_i, x_j]_L \otimes [x, y_i]_L \otimes y_j + [[x, y_i]_L, x_j]_L \otimes y_j \otimes x_i - [x, [x_i, x_j]_L]_L \otimes y_i \otimes y_j + x_i \otimes [[x, y_i]_L, x_j]_L \otimes y_j \\
   		& + x_i \otimes x_j \otimes [[x, y_i]_L, y_j]_L + x_j \otimes [y_i, y_j]_L \otimes [x, x_i]_L + x_j \otimes [[x, y_i]_L, y_j]_L \otimes x_i + y_j \otimes [x, x_i]_L \otimes [y_i, x_j]_L \\
   		& + y_j \otimes x_i \otimes [[x, y_i]_L, x_j]_L - x_i \otimes [x, [y_i, x_j]_L]_L \otimes y_j - x_i \otimes [x, x_j]_L \otimes [y_i, y_j]_L - x_i \otimes [y_i, x_j]_L \otimes [x, y_j]_L \\
   		& - x_i \otimes x_j \otimes [x, [y_i, y_j]_L]_L.
   	\end{align*}
   	
   	Interchanging the indices $i$ and $j$ in the first and second terms and using the equation \ref{422}, their sum becomes
   	\[
   	y_i \otimes y_j \otimes [x, [x_j, x_i]_L]_L + x_i \otimes y_j \otimes [x, [x_j, y_i]_L]_L.
   	\]
   	Using the equation \ref{422}, the sum of the third and fourth terms is
   	\[
   	[y_j, x_i]_L \otimes [x, y_i]_L \otimes x_j + [y_j, [x, x_i]_L]_L \otimes y_i \otimes x_j.
   	\]
   	Similarly, the sum of the term $[y_j, x_i]_L \otimes [x, y_i]_L \otimes x_j$ and the fifth term becomes
   	\[
   	x_i \otimes [x, y_j]_L \otimes [x_j, y_i]_L + y_i \otimes [x, y_j]_L \otimes [x_j, x_i]_L,
   	\]
   	and using the $\omega$-Jacobi identity, the sum of the term $[y_j, [x, x_i]_L]_L \otimes y_i \otimes x_j$ and the sixth term is
   	\[
   	[[x_i, y_j]_L, x]_L \otimes y_i \otimes x_j.
   	\]
   	Furthermore, the sum of $[[x_i, y_j]_L, x]_L \otimes y_i \otimes x_j$ and the seventh term becomes
   	\[
   	- y_j \otimes y_i \otimes [x_i, [x, x_j]_L]_L - x_j \otimes y_i \otimes [x_i, [x, y_j]_L]_L.
   	\]
   	
   	Substituting these results, we find that the expression of $\mathrm{Jac}_\Delta(x) - \mathrm{ad}_x[R, R]_L$ can be written in the form $\sum\limits_i (x_i \otimes u_i + y_i \otimes v_i)$, where $u_i, v_i \in L$. In fact,
   	\begin{align*}
   		u_i ={}& [y_j, y_i] \otimes [x, x_j] - [y_i, x_j] \otimes [x, y_j] + y_j \otimes [x, [x_j, y_i]] - x_j \otimes [x, [y_i, y_j]] \\
   		&+ [x, y_j] \otimes [x_j, y_i] - [x, x_j] \otimes [y_i, y_j] - [x, [y_i, x_j]] \otimes y_j + [[x, y_j], y_i] \otimes x_j \\
   		&+ [[x, y_i], x_j] \otimes y_j + x_j \otimes [[x, y_i], y_j] + y_j \otimes [[x, y_i], x_j].
   	\end{align*}
   	On the right-hand side, the sum of the first four terms is zero by equation \ref{422}, and that of the next three terms becomes
   	\[
   	[x, [y_i, y_j]_L]_L \otimes x_j.
   	\]
   	By the $\omega$-Jacobi identity in $L$, the sum of $[x, [y_i, y_j]_L]_L \otimes x_j$ and the eighth term is
   	\[
   	[[x, y_i]_L, y_j]_L \otimes x_j.
   	\]
   	Furthermore, replacing $x$ in equation \ref{422} by $[x, y_j]$, the sum of $[[x, y_i]_L, y_j]_L \otimes x_j$ and the last three terms becomes
   	\[
   	[[x, y_i]_L, y_j]_L \otimes x_j + [[x, y_i]_L, x_j]_L \otimes y_j + x_j \otimes [[x, y_i]_L, y_j]_L + y_j \otimes [[x, y_i]_L, x_j]_L = 0.
   	\]
   	Thus, we obtain $u_i = 0$. Similarly, one can prove that
   	\[
   	v_i = [x, x_i] \otimes [y_j, x_i] + [x, y_j] \otimes [x_j, x_i] + x_j \otimes [[x, y_j], x_i] + y_j \otimes [[x, x_i], x_j] + y_j \otimes [x, [x_j, x_i]] = 0.
   	\]
   	Hence the conclusion holds.  	   
   \end{proof}
   \begin{definition}\label{43}
   	   A pair of multiplicative \(\omega\)-Lie algebras $(L, [\cdot, \cdot]_L, r)$ and $(L^*, [\cdot, \cdot]_{L^*}, r^*)$ with $\Delta$ given by 
   	   \[
   	   \langle \xi \otimes \eta, \Delta(x) \rangle = \langle [\xi, \eta]_{L^*} + r^*(\eta)\xi - 2r^*(\xi)\eta, x \rangle.
   	   \]
   	   is called a Yang-Baxter $\omega$-Lie bialgebra if 
   	   \[
   	   \Delta[x, y] =  \ ( \mathrm{ad}_2 x \otimes \mathrm{id} + \mathrm{id} \otimes \mathrm{ad}_2 x ) \Delta(y) - ( \mathrm{ad}_2 y \otimes \mathrm{id} + \mathrm{id} \otimes \mathrm{ad}_2 y ) \Delta(x)  \ - 2r([y, x])R + 2r(y)x \otimes u_r
   	   \]
   	   \[
   	   - 2r(x)y \otimes u_r  \ - u_r \otimes [x, y]  \ - u_r \otimes r(y)x + u_r \otimes r(x)y + 2[x, y] \otimes u_r + 3r(u_r)y \otimes x - 3r(u_r)x \otimes y,
   	   \]
   	   where $\left\langle r^*, a \right\rangle = \left\langle a, u_r \right\rangle, \forall a \in L^*$, $R \in L \otimes L$.
   \end{definition}
   \begin{theorem}\label{143}
       Let $(L, [\cdot, \cdot]_L, r)$ be a multiplicative $\omega$-Lie algebra. Fix an element $u_r \in C(L)$. Take $R = \sum\limits_i x_i \otimes y_i \in L \otimes L$ satisfying the conditions in Definition \ref{411}. Define a bracket $[\cdot, \cdot]_{L^*} : L^* \otimes L^* \to L^*$ and a linear form $r^* : L^* \to \mathrm{K}$ by
       \[
       \langle [a, b]_{L^*} + b(u_r)a - 2a(u_r)b, x \rangle = \langle a \otimes b, \Delta(x) \rangle,  \forall\, a, b \in L^*,\ x \in L, 
       \]
       \[
       \langle r^*, a \rangle = \langle a, u_r \rangle,  \forall\, a \in L^*. 
       \]
       Define $\Delta$ by equation(\ref{41}). Then $(L^*, [\cdot, \cdot]_{L^*}, r^*)$ is a multiplicative $\omega$-Lie algebra if and only if the following two conditions hold:
       \begin{enumerate}
       	\item $[x, R + \sigma(R)]_L = 0$ for all $x \in L$,
       	\item $\mathrm{ad}_x [R, R]_L = 0$ for all $x \in L$.
       \end{enumerate}
       Under these conditions, $(L, L^*)$ is a Yang-Baxter $\omega$-Lie bialgebra.
   \end{theorem}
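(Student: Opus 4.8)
The plan is to test the two axioms that make $(L^{*},[\cdot,\cdot]_{L^{*}},r^{*})$ a multiplicative $\omega$-Lie algebra — anticommutativity of $[\cdot,\cdot]_{L^{*}}$ and the $r^{*}$-twisted Jacobi identity — by dualizing each into a condition on the coproduct $\Delta$ of \eqref{41}, and then to invoke Lemma~\ref{142} to translate the resulting cocycle condition into a statement about $[R,R]_{L}$. Throughout I use the explicit formula $\Delta(x)=[x,R]_{L}-2x\otimes u_{r}+u_{r}\otimes x$, the centrality $u_{r}\in C(L)$, the $\omega$-Jacobi identity in $L$, and the two conditions on $R$ from Definition~\ref{411} (namely $x_{i},y_{i}\in\ker r$ and $r\circ\mathrm{ad}_{1}x_{i}=r\circ\mathrm{ad}_{1}y_{i}=0$), which repeatedly kill stray $r$-terms.

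First I would handle anticommutativity. Since $\sigma\bigl([x,R]_{L}\bigr)=[x,\sigma(R)]_{L}$, one gets $\Delta(x)+\sigma(\Delta(x))=[x,R+\sigma(R)]_{L}-x\otimes u_{r}-u_{r}\otimes x$; pairing $\langle a\otimes b,\,\cdot\,\rangle$ against this and substituting the defining relation $\langle[a,b]_{L^{*}},x\rangle=\langle a\otimes b,\Delta(x)\rangle-b(u_{r})a(x)+2a(u_{r})b(x)$, the scalar corrections cancel and one is left with $\langle[a,b]_{L^{*}}+[b,a]_{L^{*}},x\rangle=\langle a\otimes b,\,[x,R+\sigma(R)]_{L}\rangle$. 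Hence $[\cdot,\cdot]_{L^{*}}$ is anticommutative if and only if condition (1) holds. Next, evaluating $\Delta$ at $u_{r}$ and using $[u_{r},x_{i}]_{L}=[u_{r},y_{i}]_{L}=0$ gives $\Delta(u_{r})=-u_{r}\otimes u_{r}$, and then the defining relation at $x=u_{r}$ yields $r^{*}([a,b]_{L^{*}})=\langle[a,b]_{L^{*}},u_{r}\rangle=0$ for all $a,b$; thus the $r^{*}$-twisted Jacobi identity on $L^{*}$ collapses to the ordinary Jacobi identity for $[\cdot,\cdot]_{L^{*}}$.

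Now let $\delta\colon L\to L\otimes L$ be the linear dual of $[\cdot,\cdot]_{L^{*}}$, so $\delta(x)=\Delta(x)-x\otimes u_{r}+2u_{r}\otimes x$ and $\delta(u_{r})=0$. The Jacobi identity for $[\cdot,\cdot]_{L^{*}}$ is equivalent to the co-Jacobi identity $\sum_{c,p}(\mathrm{id}\otimes\delta)\delta(x)=0$ for all $x$. Substituting the formula for $\delta$, using $\delta(u_{r})=0$, and collecting after cyclic symmetrization, a direct computation shows that $\sum_{c,p}(\mathrm{id}\otimes\delta)\delta(x)=0$ for all $x$ is equivalent to $\mathrm{Jac}_{\Delta}(x)=0$ for all $x$ — indeed this is exactly how the auxiliary $u_{r}$-terms $2\sigma(\Delta(x))\otimes u_{r}+u_{r}\otimes\Delta(x)+2u_{r}\otimes u_{r}\otimes x$ in the definition of $\mathrm{Jac}_{\Delta}$ arise. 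Assuming now condition (1) (which, as shown above, is in any case forced by anticommutativity), Lemma~\ref{142} gives $\mathrm{Jac}_{\Delta}(x)=\mathrm{ad}_{x}[R,R]_{L}$, so this vanishes identically if and only if condition (2) holds. Combining with the previous paragraph, $(L^{*},[\cdot,\cdot]_{L^{*}},r^{*})$ is a multiplicative $\omega$-Lie algebra if and only if (1) and (2) both hold.

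It remains to check that, under (1) and (2), $(L,L^{*})$ is a Yang--Baxter $\omega$-Lie bialgebra, i.e.\ that the single identity of Definition~\ref{43} holds. Expanding $(\mathrm{ad}_{2}x\otimes\mathrm{id}+\mathrm{id}\otimes\mathrm{ad}_{2}x)\Delta(y)-(\mathrm{ad}_{2}y\otimes\mathrm{id}+\mathrm{id}\otimes\mathrm{ad}_{2}y)\Delta(x)$ with $\mathrm{ad}_{2}x(z)=[x,z]+r(z)x$: the conditions $x_{i},y_{i}\in\ker r$ and $r\circ\mathrm{ad}_{1}x_{i}=r\circ\mathrm{ad}_{1}y_{i}=0$ annihilate all $r(x_{i})$, $r(y_{i})$, $r([y,x_{i}])$, $\dots$ coefficients, centrality of $u_{r}$ annihilates $[x,u_{r}]_{L}$ and $[y,u_{r}]_{L}$, the "$[x,x_{i}]\otimes[y,y_{i}]$" and "$[y,x_{i}]\otimes[x,y_{i}]$" summands cancel between the two halves, and the $\omega$-Jacobi identity in $L$ rewrites the iterated brackets as $[[x,y],x_{i}]_{L}$ and $[[x,y],y_{i}]_{L}$ modulo $r([x,y])$-multiples of $x_{i},y_{i}$. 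What is left is $[[x,y],R]_{L}$ plus an explicit combination of the terms $r([x,y])R$, $[x,y]\otimes u_{r}$, $u_{r}\otimes[x,y]$, $r(y)x\otimes u_{r}$, $r(x)y\otimes u_{r}$, $r(u_{r})x\otimes y$, $r(u_{r})y\otimes x$, $r(y)u_{r}\otimes x$, $r(x)u_{r}\otimes y$; adding in the remaining explicit terms of Definition~\ref{43} cancels all of these, leaving exactly $[[x,y],R]_{L}-2[x,y]\otimes u_{r}+u_{r}\otimes[x,y]=\Delta[x,y]$. (This last verification in fact uses only the hypotheses on $R$, the $\omega$-Jacobi identity in $L$, and $u_{r}\in C(L)$; conditions (1)--(2) enter only to guarantee that $L^{*}$ is a multiplicative $\omega$-Lie algebra, which is part of the notion.) The most error-prone steps are the two large term-matching computations — showing $\sum_{c,p}(\mathrm{id}\otimes\delta)\delta(x)=0\Leftrightarrow\mathrm{Jac}_{\Delta}(x)=0$, and the Definition~\ref{43} check — but neither is conceptually deep: the one genuinely delicate analytic fact, $\mathrm{Jac}_{\Delta}(x)=\mathrm{ad}_{x}[R,R]_{L}$ under condition (1), is already isolated in Lemma~\ref{142}.
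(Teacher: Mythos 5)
Your proposal is correct and follows essentially the same route as the paper's proof: skew-symmetry of $[\cdot,\cdot]_{L^*}$ is identified with condition (I), the Jacobi identity on $L^*$ is identified with the vanishing of $\mathrm{Jac}_\Delta$ and hence with condition (II) via Lemma~\ref{142}, and the compatibility identity of Definition~\ref{43} is obtained by directly expanding $\Delta([x,y]_L)$ using the hypotheses on $R$, the centrality of $u_r$, and the $\omega$-Jacobi identity in $L$. Your additional observation that $\Delta(u_r)=-u_r\otimes u_r$ forces $r^*\circ[\cdot,\cdot]_{L^*}=0$, so that the $r^*$-twisted Jacobi identity on $L^*$ collapses to the ordinary one, is a small clarification the paper leaves implicit but does not alter the argument.
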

   \begin{proof}
   	$(L^*, [\cdot,\cdot]_{L^*}, r^*)$ is a multiplicative $\omega$-Lie algebra if and only if the bracket $[\cdot,\cdot]_{L^*}$ is skew-symmetric and satisfies the $\omega$-Jacobi identity. The proof that $R$ satisfies the condition~\textup{(I)} if and only if $[\cdot,\cdot]_{L^*}$ is skew-symmetric is straightforward.
   	
   	For all $a, b, c \in L^*$ and $x \in L$, we have
   	\begin{align*}
   		&\left\langle [a, [b, c]_{L^*}]_{L^*} + [b, [c, a]_{L^*}]_{L^*} + [c, [a, b]_{L^*}]_{L^*} + r^*([b, c]_{L^*})a + r^*([c, a]_{L^*})b + r^*([a, b]_{L^*})c, x \right\rangle \\
   		=& \langle a \otimes b \otimes c, \mathrm{Jac}_\Delta(x) \rangle.
   	\end{align*}
   	It is clear that $[\cdot,\cdot]_{L^*}$ satisfies the $\omega$-Jacobi identity if and only if $\mathrm{Jac}_\Delta$ is the zero map. The proof that condition (II) is equivalent to $\mathrm{Jac}_\Delta$ being the zero map follows from Lemma \ref{142}.
   	
   	For all $x, y \in L$, from equation (\ref{41}), we have
   	\begin{align*}
   		\Delta([x, y]_L) &= (\mathrm{ad}_1 [x, y] \otimes \mathrm{id} + \mathrm{id} \otimes \mathrm{ad}_1 [x, y]) R - 2 [x, y] \otimes u_r + u_r \otimes [x, y] \\
   		&= (\mathrm{ad}_1 [x, y] \otimes \mathrm{id} + \mathrm{id} \otimes \mathrm{ad}_1 [x, y])(x_i \otimes y_i) - 2 [x, y] \otimes u_r + u_r \otimes [x, y] \\
   		&= [[x, y], x_i] \otimes y_i + x_i \otimes [[x, y], y_i] - 2 [x, y] \otimes u_r + u_r \otimes [x, y].
   	\end{align*}
   	Moreover, we get
   	\begin{align*}
   		&(\mathrm{ad}_2 x \otimes \mathrm{id} + \mathrm{id} \otimes \mathrm{ad}_2 x)\Delta(y) - (\mathrm{ad}_2 y \otimes \mathrm{id} + \mathrm{id} \otimes \mathrm{ad}_2 y)\Delta(x) \\
   		&= (\mathrm{ad}_2 x \otimes \mathrm{id} + \mathrm{id} \otimes \mathrm{ad}_2 x)\left( (\mathrm{ad}_1 y \otimes \mathrm{id} + \mathrm{id} \otimes \mathrm{ad}_1 y) R - 2y \otimes u_r + u_r \otimes y \right) \\
   		&\quad - (\mathrm{ad}_2 y \otimes \mathrm{id} + \mathrm{id} \otimes \mathrm{ad}_2 y)\left( (\mathrm{ad}_1 x \otimes \mathrm{id} + \mathrm{id} \otimes \mathrm{ad}_1 x) R - 2x \otimes u_r + u_r \otimes x \right) \\
   		&= (\mathrm{ad}_2 x \otimes \mathrm{id} + \mathrm{id} \otimes \mathrm{ad}_2 x)(\mathrm{ad}_1 y \otimes \mathrm{id} + \mathrm{id} \otimes \mathrm{ad}_1 y)R - (\mathrm{ad}_2 y \otimes \mathrm{id} + \mathrm{id} \otimes \mathrm{ad}_2 y)(\mathrm{ad}_1 x \otimes \mathrm{id} + \mathrm{id} \otimes \mathrm{ad}_1 x)R \\
   		&\quad - 2(\mathrm{ad}_2 x \otimes \mathrm{id} + \mathrm{id} \otimes \mathrm{ad}_2 x)(y \otimes u_r) + (\mathrm{ad}_2 x \otimes \mathrm{id} + \mathrm{id} \otimes \mathrm{ad}_2 x)(u_r \otimes y) \\
   		&\quad + 2(\mathrm{ad}_2 y \otimes \mathrm{id} + \mathrm{id} \otimes \mathrm{ad}_2 y)(x \otimes u_r) - (\mathrm{ad}_2 y \otimes \mathrm{id} + \mathrm{id} \otimes \mathrm{ad}_2 y)(u_r \otimes x) \\
   		&= (\mathrm{ad}_2 x \otimes \mathrm{id})(\mathrm{ad}_1 y \otimes \mathrm{id}) R - (\mathrm{ad}_2 y \otimes \mathrm{id})(\mathrm{ad}_1 x \otimes \mathrm{id}) R \\
   		&\quad + (\mathrm{id} \otimes \mathrm{ad}_2 x)(\mathrm{id} \otimes \mathrm{ad}_1 y) R - (\mathrm{id} \otimes \mathrm{ad}_2 y)(\mathrm{id} \otimes \mathrm{ad}_1 x) R \\
   		&\quad - 2(\mathrm{ad}_2 x \otimes \mathrm{id} + \mathrm{id} \otimes \mathrm{ad}_2 x)(y \otimes u_r) + (\mathrm{ad}_2 x \otimes \mathrm{id} + \mathrm{id} \otimes \mathrm{ad}_2 x)(u_r \otimes y) \\
   		&\quad + 2(\mathrm{ad}_2 y \otimes \mathrm{id} + \mathrm{id} \otimes \mathrm{ad}_2 y)(x \otimes u_r) - (\mathrm{ad}_2 y \otimes \mathrm{id} + \mathrm{id} \otimes \mathrm{ad}_2 y)(u_r \otimes x) \\
   		&= [x_i, [y, x]] \otimes y_i + r([y, x])x_i \otimes y_i + x_i \otimes [y_i, [y, x]] + r([y, x])x_i \otimes y_i \\
   		&\quad - 2[x, y] \otimes u_r - 2r(y)x \otimes u_r - 2y \otimes r(u_r)x + r(u_r)x \otimes y + u_r \otimes [x, y] + u_r \otimes r(y)x \\
   		&\quad + 2[y, x] \otimes u_r + 2r(x)y \otimes u_r + 2x \otimes r(u_r)y - r(u_r)y \otimes x - u_r \otimes [y, x] - u_r \otimes r(x)y.
   	\end{align*}
   	Therefore, we obtain
   	\begin{align*}
   		\Delta[x, y]_L &= (\mathrm{ad}_2 x \otimes \mathrm{id} + \mathrm{id} \otimes \mathrm{ad}_2 x)\Delta(y) - (\mathrm{ad}_2 y \otimes \mathrm{id} + \mathrm{id} \otimes \mathrm{ad}_2 y)\Delta(x) - 2r([y, x])R \\
   		&\quad + 2r(y)x \otimes u_r - 2r(x)y \otimes u_r - u_r \otimes [x, y] - u_r \otimes r(y)x + u_r \otimes r(x)y + 2[x, y] \otimes u_r \\
   		&\quad + 3r(u_r)y \otimes x - 3r(u_r)x \otimes y.
   	\end{align*}
   	The compatibility conditions for a Yang-Baxter $\omega$-Lie bialgebra in Definition \ref{43} hold. We complete the proof.
   \end{proof}
   \begin{remark}
      If $R \in L \otimes L$ satisfies the following two conditions: $R + \sigma(R) = 0$ and $[R, R]_L = 0$, then conditions \textup{(I)} and \textup{(II)} in Theorem~\ref{143} hold automatically. Therefore, a Yang--Baxter $\omega$-Lie bialgebra can be constructed from a skew-symmetric solution of the $\omega$-Yang--Baxter equation.
   \end{remark}
   We observe that for multiplicative $\omega$-Lie algebras, the structure associated with the $\omega$-Yang-Baxter equation is the Yang-Baxter $\omega$-Lie bialgebra, which does not coincide with the multiplicative $\omega$-Lie bialgebra that is equivalent to Manin triples and matched pairs. This phenomenon differs from the case of Lie algebras, and similar behavior has also been observed in other algebraic settings, such as Leibniz algebras \cite{cite2} and nearly associative algebras \cite{cite3}. This discrepancy reflects an essential non-uniformity among various algebraic structures. However, when the multiplicative $\omega$-Lie algebra degenerates to a Lie algebra, both the Yang-Baxter $\omega$-Lie bialgebra and the multiplicative $\omega$-Lie bialgebra reduce to the classical Lie bialgebra.
   
   \section{$(\omega)$-Left-symmetric algebras, $\omega$-$\mathcal{O}$-operators and the $\omega$-Yang-Baxter equation}\label{5}
   In this section, we introduce the $\omega$-$\mathcal{O}$-operator, and study its relationship with $\omega$-left-symmetric algebras and left-symmetric algebras respectively. We use the $\omega$-$\mathcal{O}$-operator to construct solutions of the $\omega$-Yang-Baxter equation in a semidirect product $\omega$-Lie algebra.
   \begin{definition}
       Let $(L, [\cdot, \cdot]_L, r)$ be a multiplicative $\omega$-Lie algebra. Take $R = \sum\limits_{i=1}^s x_i \otimes y_i \in L \otimes L$, $u_r \in C(L)$. The equation
       \[
       [R_{12}, R_{13}] + [R_{12}, R_{23}] + [R_{13}, R_{23}] = 0.
       \]
       is called the ``tensor form'' of the $\omega$-Yang--Baxter equation, where
       \begin{align*}
       	R_{12} &= \sum\limits_{i = 1}^s x_i \otimes y_i \otimes 1, \quad 
       	R_{13} = \sum\limits_{i = 1}^s x_i \otimes 1 \otimes y_i, \quad 
       	R_{23} = \sum\limits_{i = 1}^s 1 \otimes x_i \otimes y_i,
       \end{align*}
       and for all $a, b, c, d, e, f \in L \cup \{1\}$,
       \[
       [a \otimes b \otimes c, d \otimes e \otimes f] := [a, d] \otimes b \otimes f + a \otimes [b, e] \otimes f + a \otimes e \otimes [c, f] + \frac{3}{2s}\Big([a, u_r] \otimes b \otimes c + a \otimes [b, u_r] \otimes c 
       \]
       \[
       + b \otimes a \otimes [c, u_r] + [d, u_r] \otimes f \otimes e + d \otimes [e, u_r] \otimes f + d \otimes e \otimes [f, u_r]\Big),
       \]
       where
       \[
       [1, u_r] := u_r, \quad [1, h] := 0,  \forall\, h \in L \setminus \{u_r\}, \quad [a, b] := [a, b]_L,  \forall\, a, b \in L.
       \]
   \end{definition}
   \begin{proposition}
   	   The two forms of the $\omega$-Yang--Baxter equation, namely
   	   \[
   	   [R_{12}, R_{13}] + [R_{12}, R_{23}] + [R_{13}, R_{23}] = 0
   	   \quad \text{and} \quad
   	   [R, R]_L = 0,
   	   \]
   	   are equivalent.
   \end{proposition}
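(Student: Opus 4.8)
The plan is to prove the apparently stronger assertion that the two expressions coincide as elements of $L^{\otimes 3}$, namely
\[
[R_{12},R_{13}]+[R_{12},R_{23}]+[R_{13},R_{23}]=[R,R]_L,
\]
from which equivalence of the two equations is immediate. Writing $R=\sum_i x_i\otimes y_i$, so that $R_{12}=\sum_i x_i\otimes y_i\otimes 1$, $R_{13}=\sum_i x_i\otimes 1\otimes y_i$, $R_{23}=\sum_i 1\otimes x_i\otimes y_i$, the first step is to substitute these into each of the three brackets and expand using the defining formula for $[a\otimes b\otimes c,d\otimes e\otimes f]$, obtaining in each case a double sum over the independent indices $i,j$.

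The second step is simplification via the two conventions built into the definition. Since $u_r\in C(L)$, we have $[h,u_r]_L=0$ for every $h\in L$, while $[1,u_r]=u_r$ and $[1,h]=0$ for $h\neq u_r$ (with $[h,1]$ read via anticommutativity, hence also $0$ for $h\neq u_r$); and any monomial still carrying a bare $1$ in a tensor slot has been multiplied by one of these vanishing brackets. Under these rules, in each bracket the three ``genuine'' summands collapse to a single survivor: $[R_{12},R_{13}]$ leaves $\sum_{i,j}[x_i,x_j]_L\otimes y_i\otimes y_j$, $[R_{12},R_{23}]$ leaves $\sum_{i,j}x_i\otimes[y_i,x_j]_L\otimes y_j$, and $[R_{13},R_{23}]$ leaves $\sum_{i,j}x_i\otimes x_j\otimes[y_i,y_j]_L$. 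Their sum is exactly the first sum in Definition~\ref{411}.

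For the $\tfrac{3}{2s}$-correction terms, the only ones not killed by centrality of $u_r$ are those whose internal bracket is $[1,u_r]=u_r$. A direct inspection gives: $[R_{12},R_{13}]$ retains $\tfrac{3}{2s}\sum_{i,j}(y_i\otimes x_i\otimes u_r+x_j\otimes u_r\otimes y_j)$; $[R_{12},R_{23}]$ retains $\tfrac{3}{2s}\sum_{i,j}(y_i\otimes x_i\otimes u_r+u_r\otimes y_j\otimes x_j)$; and $[R_{13},R_{23}]$ retains $\tfrac{3}{2s}\sum_{i,j}(x_i\otimes u_r\otimes y_i+u_r\otimes y_j\otimes x_j)$. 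In each of these terms one index is free, so summing it out turns $\tfrac{1}{s}$ into $1$; and each of the three monomials $y_i\otimes x_i\otimes u_r$, $x_i\otimes u_r\otimes y_i$, $u_r\otimes y_i\otimes x_i$ occurs in precisely two of the three brackets, each time with coefficient $\tfrac{3}{2}$. Hence the total correction is $3\sum_i(y_i\otimes x_i\otimes u_r+x_i\otimes u_r\otimes y_i+u_r\otimes y_i\otimes x_i)$, which is the second sum in Definition~\ref{411}; combined with the previous step, this proves the displayed identity.

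The substance of the argument is just this expansion-and-cancellation bookkeeping. Two points call for slight care: first, confirming that every residual monomial containing a formal $1$ in a tensor factor genuinely vanishes, which relies on the conventions $[1,h]=[h,1]=0$ for $h\neq u_r$ rather than on anticommutativity inside $L$, since $1\notin L$; and second, verifying the arithmetic $\tfrac{3}{2s}\cdot s\cdot 2=3$ for the $u_r$-terms, which is the one spot where the (otherwise opaque) normalization $\tfrac{3}{2s}$ in the tensor bracket is precisely calibrated to make the two forms agree. I anticipate no conceptual obstacle beyond this.
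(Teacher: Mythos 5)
Your proof is correct and is exactly the direct expansion that the paper declares ``straightforward and omitted'': the three main summands of the tensor bracket reproduce the first sum in Definition~\ref{411}, and the $\tfrac{3}{2s}$-corrections, after summing out the free index and noting that each of the three $u_r$-monomials survives in precisely two of the three brackets, assemble into the tail $3\sum_i\left(y_i\otimes x_i\otimes u_r+x_i\otimes u_r\otimes y_i+u_r\otimes y_i\otimes x_i\right)$, so the two expressions are literally equal in $L^{\otimes 3}$. The one caveat is the point you already flag: the paper never defines $[h,1]$ and its conventions tacitly assume no $x_i$ or $y_i$ equals $u_r$ (otherwise, e.g., $[1,y_j]=u_r$ would produce an extra term $x_i\otimes x_j\otimes u_r$ in $[R_{12},R_{23}]$), but that is a defect of the paper's definition rather than of your argument.
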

   \begin{proof}
   	   The proof is straightforward and omitted.
   \end{proof}
   \begin{definition}
       	Let $(L, [\cdot, \cdot]_L, r)$ be a multiplicative $\omega$-Lie algebra and \( (\rho, V) \) be a representation of $(L, [\cdot, \cdot]_L, r)$. A linear map $T: V \to L$ is called an $\omega$-$\mathcal{O}$-operator associated to \( (\rho, V) \) if
       	\[
       	[T(u), T(v)] = T(\rho(T(u))v - \rho(T(v))u) + 2r(T(v))T(u) - 2r(T(u))T(v),\forall u,v \in V.
       	\]
   \end{definition}
   When the multiplicative $\omega$-Lie algebra degenerates to a Lie algebra, the $\omega$-$\mathcal{O}$-operator associated to $(\rho, V)$ reduces to an $\mathcal{O}$-operator of the Lie algebra.
   \begin{definition}\cite{cite12}
       An $\omega$-left-symmetric algebra is a triple $(V, \cdot, \omega)$ consisting of a vector space $V$ over a field $\mathrm{K}$, a bilinear multiplication $(u, v) \mapsto u \cdot v$, and a bilinear form $\omega: V \times V \to \mathrm{K}$ satisfying
       \[
       (u \cdot v) \cdot w - u \cdot (v \cdot w) - (v \cdot u) \cdot w + v \cdot (u \cdot w) = \omega(u, v)\, w,  \forall\, u, v, w \in V.
       \]
   \end{definition}
   \begin{definition}
   	   Let $(V, \cdot, \omega)$ be an $\omega$-left-symmetric algebra. If there is a linear form $r : V \to \mathrm{K}$ such that
   	   \[
   	   \omega(u, v) = r(u \cdot v) - r(v \cdot u),  \forall\, u, v \in V.
   	   \]
   	   then the triple $(V, \cdot, r)$ is called a multiplicative $\omega$-left-symmetric algebra.
   \end{definition}
   \begin{proposition}\cite{cite12}
   	   Let $(V, \cdot, \omega)$ be an $\omega$-left-symmetric algebra. Define a triple $(g(V), [\cdot, \cdot], \omega)$, where $g(V) = V$ as a vector space, and the bracket $[\cdot, \cdot]$ is given by
   	   \[
   	   [u, v] = u \cdot v - v \cdot u,  \forall\, u, v \in V.
   	   \]
   	   Then $(g(V), [\cdot, \cdot], \omega)$ is an $\omega$-Lie algebra, called the sub-adjacent $\omega$-Lie algebra of $(V, \cdot, \omega)$.
   	   
   	   In the case that $(V, \cdot, r)$ is a multiplicative $\omega$-left-symmetric algebra, the corresponding sub-adjacent $\omega$-Lie algebra $(V, [\cdot, \cdot], r)$ is a multiplicative $\omega$-Lie algebra.
    \end{proposition}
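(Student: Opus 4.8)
The plan is to check the two defining axioms of an $\omega$-Lie algebra directly from the $\omega$-left-symmetric identity, and then to read off the multiplicative refinement as an immediate corollary. Anticommutativity of $[\cdot,\cdot]$ is built in, since $[u,u] = u\cdot u - u\cdot u = 0$ and $[u,v] = -[v,u]$ by construction; so the real content is the $\omega$-Jacobi identity.

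First I would expand each double bracket by bilinearity. Writing $[u,v] = u\cdot v - v\cdot u$, one gets
\[
[[x,y],z] = (x\cdot y)\cdot z - (y\cdot x)\cdot z - z\cdot(x\cdot y) + z\cdot(y\cdot x),
\]
together with the two analogous expressions for $[[y,z],x]$ and $[[z,x],y]$ obtained by cyclically permuting $x,y,z$; this produces twelve terms in all. I would then recast the defining identity of $(V,\cdot,\omega)$ as
\[
(u\cdot v)\cdot w - u\cdot(v\cdot w) - (v\cdot u)\cdot w + v\cdot(u\cdot w) = \omega(u,v)\,w,
\]
and observe that the twelve terms split into three disjoint blocks of four, each block being exactly the left-hand side of this identity for one of the substitutions $(u,v,w)=(x,y,z)$, $(y,z,x)$, $(z,x,y)$: the terms $(x\cdot y)\cdot z$, $-x\cdot(y\cdot z)$, $-(y\cdot x)\cdot z$, $y\cdot(x\cdot z)$ give $\omega(x,y)z$; the terms $(y\cdot z)\cdot x$, $-y\cdot(z\cdot x)$, $-(z\cdot y)\cdot x$, $z\cdot(y\cdot x)$ give $\omega(y,z)x$; and the terms $(z\cdot x)\cdot y$, $-z\cdot(x\cdot y)$, $-(x\cdot z)\cdot y$, $x\cdot(z\cdot y)$ give $\omega(z,x)y$. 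Since these three blocks exhaust the twelve Jacobi terms with no repetition, summing them yields $[[x,y],z]+[[y,z],x]+[[z,x],y] = \omega(x,y)z+\omega(y,z)x+\omega(z,x)y$, so $(g(V),[\cdot,\cdot],\omega)$ is an $\omega$-Lie algebra.

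For the multiplicative statement, assume $(V,\cdot,r)$ is a multiplicative $\omega$-left-symmetric algebra, so that $\omega(u,v) = r(u\cdot v) - r(v\cdot u)$ for all $u,v \in V$. Linearity of $r$ together with the definition of the bracket gives $\omega(u,v) = r(u\cdot v - v\cdot u) = r([u,v])$. Substituting this into the $\omega$-Jacobi identity just established replaces its right-hand side by $r([x,y])z + r([y,z])x + r([z,x])y$, which is precisely the defining identity of a multiplicative $\omega$-Lie algebra; hence $(V,[\cdot,\cdot],r)$ is multiplicative.

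The argument presents no conceptual obstacle — everything follows by unwinding definitions and applying the $\omega$-left-symmetric identity three times. The only point that needs care is the bookkeeping in the second paragraph: one must confirm that the partition of the twelve Jacobi terms into the three four-term blocks is genuinely exhaustive and disjoint, a finite check that I have indicated above but that should be written out explicitly in the final proof.
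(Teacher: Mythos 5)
Your proof is correct: the partition of the twelve Jacobi terms into three four-term instances of the $\omega$-left-symmetric identity is exhaustive and disjoint as you claim, and the multiplicative case follows immediately from $\omega(u,v)=r(u\cdot v-v\cdot u)=r([u,v])$. The paper itself cites this result from the literature without proof, and your direct verification is the standard argument one would expect there.
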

    	%
    	%
    	%
    %
    	%
   	\begin{theorem}\label{57}
        Let $T: V \to L$ be an $\omega$-$\mathcal{O}$-operator associated with a representation \( (\rho, V) \) of a multiplicative $\omega$-Lie algebra $(L, [\cdot, \cdot], r)$. Define a bilinear multiplication $* : V \times V \to V$ and a bilinear form $\overline{\omega} : V \times V \to \mathrm{K}$ by
        \[
        u * v = \rho(T(u))v - 2r(T(u))v,
        \]
        \[
        \overline{\omega}(u, v) = 2r(T(\rho(T(v))u)) - 2r(T(\rho(T(u))v)) + r([T(u), T(v)]).
        \]
        for all $u, v \in V$. Then $(V, *, \overline{\omega})$ is an $\omega$-left-symmetric algebra.
   \end{theorem}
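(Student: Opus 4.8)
The plan is to introduce the shorthand $L(u) := \rho(T(u)) - 2r(T(u))\mathrm{id}_V$, so that the product reads $u * v = L(u)v$. Then the whole verification reduces to understanding two objects: the commutator $L(u)L(v) - L(v)L(u)$ and the element $T(u*v - v*u)$. Since the only axiom of an $\omega$-left-symmetric algebra is the single identity $(u*v)*w - u*(v*w) - (v*u)*w + v*(u*w) = \overline{\omega}(u,v)\,w$ (bilinearity of $*$ and $\overline{\omega}$ being automatic), this is all that must be checked.

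First I would observe that $2r(T(u))$ and $2r(T(v))$ are scalars, so the first-order corrections cancel in the commutator and $L(u)L(v) - L(v)L(u) = [\rho(T(u)),\rho(T(v))]$. Applying the defining relation of the representation $(\rho,V)$, which for the multiplicative $\omega$-Lie algebra $(L,[\cdot,\cdot],r)$ reads $\rho([x,y]) = \rho(x)\rho(y) - \rho(y)\rho(x) + r([x,y])\mathrm{id}_V$, turns this into $\rho([T(u),T(v)]) - r([T(u),T(v)])\mathrm{id}_V$. Next I would show $T(u*v - v*u) = [T(u),T(v)]$: expanding $u*v - v*u = \bigl(\rho(T(u))v - \rho(T(v))u\bigr) - 2r(T(u))v + 2r(T(v))u$ and using linearity of $T$ together with the $\omega$-$\mathcal{O}$-operator identity $[T(u),T(v)] = T(\rho(T(u))v - \rho(T(v))u) + 2r(T(v))T(u) - 2r(T(u))T(v)$, the four scalar-coefficient terms cancel in pairs. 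Hence $L(u*v - v*u) = \rho([T(u),T(v)]) - 2r([T(u),T(v)])\mathrm{id}_V$, and comparing with the commutator computation gives the key relation $L(u)L(v) - L(v)L(u) = L(u*v - v*u) + r([T(u),T(v)])\mathrm{id}_V$.

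With this in hand the $\omega$-left-symmetric identity is immediate: the left-hand side $(u*v)*w - u*(v*w) - (v*u)*w + v*(u*w)$ equals $L(u*v - v*u)w - \bigl(L(u)L(v) - L(v)L(u)\bigr)w = -r([T(u),T(v)])\,w$. It then remains to check that $-r([T(u),T(v)]) = \overline{\omega}(u,v)$, which follows by applying the linear form $r$ to the $\omega$-$\mathcal{O}$-operator identity: the terms $2r(T(v))r(T(u)) - 2r(T(u))r(T(v))$ vanish, leaving $r([T(u),T(v)]) = r(T(\rho(T(u))v)) - r(T(\rho(T(v))u))$, and substituting this into the definition of $\overline{\omega}(u,v) = 2r(T(\rho(T(v))u)) - 2r(T(\rho(T(u))v)) + r([T(u),T(v)])$ yields exactly $-r([T(u),T(v)])$. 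The only step requiring care is the bookkeeping of the scalar corrections $2r(T(\cdot))$, but there is no genuine obstacle: every correction term pairs off, which is exactly why the construction is forced to work.
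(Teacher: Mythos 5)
Your proof is correct, and every step checks out: the commutator of $L(u)=\rho(T(u))-2r(T(u))\mathrm{id}_V$ does reduce to $[\rho(T(u)),\rho(T(v))]$, the identity $T(u*v-v*u)=[T(u),T(v)]$ follows exactly as you say, and applying $r$ to the $\omega$-$\mathcal{O}$-operator identity does give $\overline{\omega}(u,v)=-r([T(u),T(v)])$, consistent with the left-hand side you computed. The paper proves the same statement by brute force: it expands all four terms $(u*v)*w$, $u*(v*w)$, $(v*u)*w$, $v*(u*w)$ explicitly, substitutes the $\mathcal{O}$-operator and representation identities mid-stream, and stops as soon as the residue matches the defining expression for $\overline{\omega}(u,v)w$ verbatim. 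Your route packages the identical cancellations into the single structural relation $L(u)L(v)-L(v)L(u)=L(u*v-v*u)+r([T(u),T(v)])\mathrm{id}_V$, which makes it transparent \emph{why} everything collapses (namely, $u\mapsto L(u)$ fails to intertwine the sub-adjacent bracket only by the scalar $r([T(u),T(v)])$), and as a bonus it yields the closed form $\overline{\omega}(u,v)=-r([T(u),T(v)])$, which the paper never records and which makes the multiplicativity of the resulting $\omega$-left-symmetric structure visible at a glance. The paper's version buys nothing extra here beyond being self-contained term-by-term arithmetic; yours is the cleaner argument.
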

   \begin{proof}
   	For any $u, v, w \in V$, we have
   	\[
   	(u * v) * w - u * (v * w) - (v * u) * w + v * (u * w)
   	\]
   	\begin{align*}
   		&= \rho(T(\rho(T(u))(v)))(w) - 2r(T(u))\rho(T(v))(w) 
   		- 2r(T(\rho(T(u))(v)))(w) + 4r(T(u))r(T(v))(w) \\
   		&\quad - \rho(T(\rho(T(v))(u)))(w) + 2r(T(v))\rho(T(u))(w) 
   		+ 2r(T(\rho(T(v))(u)))(w) - 4r(T(v))r(T(u))(w) \\
   		&\quad - \rho(T(u))\rho(T(v))(w) + 2r(T(v))\rho(T(u))(w) 
   		+ 2r(T(u))\rho(T(v))(w) - 4r(T(u))r(T(v))(w) \\
   		&\quad + \rho(T(v))\rho(T(u))(w) - 2r(T(u))\rho(T(v))(w) 
   		- 2r(T(v))\rho(T(u))(w) + 4r(T(v))r(T(u))(w) \\
   		&= \rho\left(T(\rho(T(u))(v) - \rho(T(v))(u))\right)(w) - \left(\rho(T(u))\rho(T(v)) - \rho(T(v))\rho(T(u))\right)(w) \\
   		&\quad + 2r\left(T(\rho(T(v))(u))\right)(w) - 2r\left(T(\rho(T(u))(v))\right)(w) + 2r(T(v))\rho(T(u))(w) - 2r(T(u))\rho(T(v))(w) \\
   		&= \rho\left([T(u), T(v)] + 2r(T(u))T(v) - 2r(T(v))T(u)\right)(w) - \left(\rho([T(u), T(v)]) - r([T(u), T(v)])\,\mathrm{id}_V\right)(w) \\
   		&\quad + 2r\left(T(\rho(T(v))(u))\right)(w) - 2r\left(T(\rho(T(u))(v))\right)(w) + 2r(T(v))\rho(T(u))(w) - 2r(T(u))\rho(T(v))(w) \\
   		&= \rho([T(u), T(v)])(w) + 2r(T(u))\rho(T(v))(w) - 2r(T(v))\rho(T(u))(w) - \rho([T(u), T(v)])(w) \\
   		&\quad + r([T(u), T(v)])(w) + 2r\left(T(\rho(T(v))(u))\right)(w) - 2r\left(T(\rho(T(u))(v))\right)(w) \\
   		&\quad + 2r(T(v))\rho(T(u))(w) - 2r(T(u))\rho(T(v))(w) \\
   		&= 2r\left(T(\rho(T(v))(u))\right)(w) - 2r\left(T(\rho(T(u))(v))\right)(w) + r([T(u), T(v)])(w).
   	\end{align*}
   	On the other hand, we have
   	\begin{align*}
   		\overline{\omega}(u, v)(w) &= 2r\left(T(\rho(T(v))(u))\right)(w) - 2r\left(T(\rho(T(u))(v))\right)(w) + r([T(u), T(v)])(w).
   	\end{align*}
   	Therefore, we conclude that $(V, *, \overline{\omega})$ is an $\omega$-left-symmetric algebra.
   \end{proof}
   It is straightforward to obtain the following conclusion.
   \begin{proposition}\label{58}
   	   Let $(V, \cdot, r)$ be an multiplicative $\omega$-left-symmetric algebra. Define maps $l_1: V \to \mathrm{gl}(V)$ and $l_2: V \to \mathrm{gl}(V)$ by
   	   \[
   	   l_1 (u)(v) := u \cdot v - 2r(v)u, \quad l_2 (u)(v) := u \cdot v.
   	   \]
   	   for all $u, v \in V$. Then $(l_1, l_2)$ is a Generalized Representation I of $(g(V), [\cdot, \cdot], r)$.
  \end{proposition}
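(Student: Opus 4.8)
The plan is a direct verification of the defining identity of a Generalized Representation I, namely
\[
l_1([u,v]) = l_2(u)\,l_1(v) - l_2(v)\,l_1(u) + r([u,v])\,\mathrm{id}_V,
\]
where $[u,v] = u\cdot v - v\cdot u$ is the bracket of the sub-adjacent $\omega$-Lie algebra $g(V)$. First I would record the two facts that drive the computation: that $l_2(u)(v) = u\cdot v$ is simply left multiplication, so $l_2(u)l_1(v)(w) = u\cdot(v\cdot w - 2r(w)v) = u\cdot(v\cdot w) - 2r(w)(u\cdot v)$; and that, since $(V,\cdot,r)$ is multiplicative, $r([u,v]) = r(u\cdot v) - r(v\cdot u) = \omega(u,v)$, so the last term on the right-hand side applied to $w$ is $\omega(u,v)\,w$.

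Next I would expand the left-hand side applied to an arbitrary $w\in V$:
\[
l_1([u,v])(w) = (u\cdot v)\cdot w - (v\cdot u)\cdot w - 2r(w)(u\cdot v) + 2r(w)(v\cdot u),
\]
and the right-hand side applied to $w$:
\[
u\cdot(v\cdot w) - v\cdot(u\cdot w) - 2r(w)(u\cdot v) + 2r(w)(v\cdot u) + \omega(u,v)\,w.
\]
The $r(w)$-weighted terms are visibly identical on both sides, so it remains to check
\[
(u\cdot v)\cdot w - (v\cdot u)\cdot w = u\cdot(v\cdot w) - v\cdot(u\cdot w) + \omega(u,v)\,w,
\]
which is precisely a rearrangement of the $\omega$-left-symmetric identity $(u\cdot v)\cdot w - u\cdot(v\cdot w) - (v\cdot u)\cdot w + v\cdot(u\cdot w) = \omega(u,v)\,w$. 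This closes the argument.

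There is no substantive obstacle here: the statement is a bookkeeping exercise, and the only point requiring a little care is keeping track of the scalar coefficients $r(w)$ that are introduced by the $-2r(\cdot)$ twists in the definitions of $l_1$, and confirming that they cancel cleanly rather than contributing spurious terms. The essential input is the single use of the $\omega$-left-symmetric axiom together with the multiplicative relation $\omega(u,v) = r(u\cdot v) - r(v\cdot u)$; everything else is linear algebra.
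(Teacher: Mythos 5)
Your verification is correct: expanding $l_1([u,v])(w)$ and $l_2(u)l_1(v)(w)-l_2(v)l_1(u)(w)+r([u,v])w$, observing that the $-2r(w)$ terms cancel, and reducing the remainder to the $\omega$-left-symmetric identity together with $r([u,v])=r(u\cdot v)-r(v\cdot u)=\omega(u,v)$ is exactly the intended argument. The paper omits this proof as ``straightforward,'' and your direct computation is precisely that routine check, so there is nothing to add.
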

  \begin{definition}
  	  Let $(L, [\cdot, \cdot]_L, r)$ be a multiplicative $\omega$-Lie algebra and $(\rho_1, \rho_2)$ be a Generalized Representation I of $(L, [\cdot, \cdot]_L, r)$. A linear map $T: V \to L$ is called an $\omega$-$\mathcal{O}$-operator associated with the Generalized Representation I $(\rho_1, \rho_2)$ if
  	  \[
  	  [T(u), T(v)]_L = T\big(\rho_1(T(u))(v) - \rho_1(T(v))(u)\big) + 2r(T(v))T(u) - 2r(T(u))T(v),  \forall\, u, v \in V.
  	  \]
  \end{definition}
  The following conclusion is immediate.
  \begin{proposition}\label{510}
  	  Let $(V, \cdot, r)$ be a multiplicative $\omega$-left-symmetric algebra, and let $(l_1, l_2)$ be the Generalized Representation I defined in Proposition \ref{58}. Then the identity map $\mathrm{id}: V \to V$ is an $\omega$-$\mathcal{O}$-operator associated with $(l_1, l_2)$.
  \end{proposition}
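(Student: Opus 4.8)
The plan is to verify the defining identity of an $\omega$-$\mathcal{O}$-operator directly by substitution, since the statement is purely computational. By Proposition~\ref{58}, the pair $(l_1, l_2)$ is a Generalized Representation~I of the sub-adjacent multiplicative $\omega$-Lie algebra $(g(V), [\cdot, \cdot], r)$, so the notion of an $\omega$-$\mathcal{O}$-operator associated with $(l_1, l_2)$ makes sense with $L = g(V)$ and candidate $T = \mathrm{id}_V$. What must then be checked is
\[
[u, v] = l_1(u)(v) - l_1(v)(u) + 2r(v)u - 2r(u)v, \qquad \forall\, u, v \in V.
\]

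First I would recall that in $g(V)$ the bracket is $[u, v] = u \cdot v - v \cdot u$ and the linear form is the same $r$ as on $V$. Then I would substitute the formula $l_1(u)(v) = u \cdot v - 2r(v)u$ from Proposition~\ref{58} into the right-hand side, obtaining
\[
(u \cdot v - 2r(v)u) - (v \cdot u - 2r(u)v) + 2r(v)u - 2r(u)v.
\]
The terms $-2r(v)u$ and $+2r(v)u$ cancel, as do $+2r(u)v$ and $-2r(u)v$, leaving $u \cdot v - v \cdot u = [u, v]$, which is exactly the left-hand side. This establishes the required identity.

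Since every step is a one-line cancellation, there is no genuine obstacle; the only points needing care are bookkeeping ones, namely that $g(V)$ is indeed a multiplicative $\omega$-Lie algebra (with linear form $r$, using the multiplicative hypothesis $\omega(u,v) = r(u\cdot v) - r(v\cdot u)$) and that $(l_1, l_2)$ is a Generalized Representation~I of it — both of which are already supplied by the earlier results and may be assumed. Hence $\mathrm{id}_V$ is an $\omega$-$\mathcal{O}$-operator associated with $(l_1, l_2)$, completing the proof.
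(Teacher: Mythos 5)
Your verification is correct and is exactly the direct substitution the paper has in mind — the paper simply states the result is immediate and omits the computation, which is precisely the one-line cancellation you carry out.
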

  \begin{remark}
      The $\omega$-$\mathcal{O}$-operator constructed from an $\omega$-left-symmetric algebra is associated with a Generalized Representation I, rather than an $\omega$-$\mathcal{O}$-operator associated with a representation. This is a phenomenon that differs from the case of Lie algebras. However, when the multiplicative $\omega$-Lie algebra degenerates to a Lie algebra, the $\omega$-left-symmetric algebra degenerates to a left-symmetric algebra, and both the $\omega$-$\mathcal{O}$-operator associated with the Generalized Representation I and the one associated with the representation reduce to the  $\mathcal{O}$-operator of the Lie algebra.
  \end{remark}
  An $\omega$-$\mathcal{O}$-operator associated with a representation can be constructed from a left-symmetric algebra.
  \begin{proposition}\label{512}
      Let $(A, \cdot)$ be a left-symmetric algebra, and let $U$ be a complement subspace of $[A, A]$ in $A$, i.e., $A = [A, A] \oplus U$. Fix a nonzero complex number ${c_x}$. Define a bracket $[\cdot, \cdot]_A$ and a linear map $r: A \to \mathbb{C}$ by
      \[
      [x, y]_A = x \cdot y - y \cdot x,  \forall\, x, y \in A,
      \]
      \[
      r([A, A]) = 0, \quad r(u) = {c_x},  \forall\, u \in U.
      \]
      Then $(A, [\cdot, \cdot]_A, r)$ is a multiplicative $\omega$-Lie algebra.
  \end{proposition}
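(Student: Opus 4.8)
The plan is to check the two defining axioms of a multiplicative $\omega$-Lie algebra directly: that $[\cdot,\cdot]_A$ is anticommutative, and that it satisfies the $\omega$-Jacobi identity relative to $r$. Anticommutativity is immediate from $[x,y]_A=x\cdot y-y\cdot x$. The point of the argument is the observation that, because $r$ is chosen to vanish on $[A,A]$, the right-hand side of the $\omega$-Jacobi identity is identically $0$, so the whole claim collapses to the classical fact that the commutator of a left-symmetric product satisfies the ordinary Jacobi identity.

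First I would note that $r$ is a well-defined linear form: since $A=[A,A]\oplus U$ as vector spaces, prescribing $r\equiv 0$ on $[A,A]$ and fixing its restriction to $U$ (normalized so that $r|_U$ has value $c_x$) determines a unique $r\in A^*$. Next, for any $x,y\in A$ the element $[x,y]_A=x\cdot y-y\cdot x$ lies in $[A,A]$ by definition, hence $r([x,y]_A)=0$, and likewise $r([y,z]_A)=r([z,x]_A)=0$; therefore
\[
r([x,y]_A)z+r([y,z]_A)x+r([z,x]_A)y=0 .
\]
It then remains to show that the left-hand side $[[x,y]_A,z]_A+[[y,z]_A,x]_A+[[z,x]_A,y]_A$ also vanishes, i.e.\ that $(A,[\cdot,\cdot]_A)$ is a Lie algebra. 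This is exactly the $\omega=0$ case of the sub-adjacent $\omega$-Lie algebra construction recalled above for $\omega$-left-symmetric algebras: expanding the Jacobiator of $[\cdot,\cdot]_A$ yields twelve associator-type terms which cancel in pairs upon invoking the left-symmetry axiom, i.e.\ the symmetry of the associator $(u,v,w)\mapsto (u\cdot v)\cdot w-u\cdot(v\cdot w)$ in its first two arguments. Combining the two displays gives the $\omega$-Jacobi identity, so $(A,[\cdot,\cdot]_A,r)$ is a multiplicative $\omega$-Lie algebra.

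There is no genuine obstacle here; the only points needing a moment's care are the well-definedness of $r$ (supplied by the direct-sum decomposition $A=[A,A]\oplus U$) and the routine bookkeeping in the Jacobiator expansion. It is worth emphasizing that in this construction both sides of the $\omega$-Jacobi identity vanish separately, so as an $\omega$-Lie algebra $(A,[\cdot,\cdot]_A)$ is in fact a Lie algebra; the content of the statement is that it carries the \emph{additional data} of a nonzero linear form $r$, which is precisely what is needed later to feed it into the $\omega$-$\mathcal{O}$-operator machinery.
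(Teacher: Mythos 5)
Your proof is correct and is exactly the routine verification the paper omits (its proof reads ``straightforward and omitted''): since $r$ kills $[A,A]$ the right-hand side of the $\omega$-Jacobi identity vanishes, and the left-hand side vanishes by the classical fact that the commutator of a left-symmetric product is a Lie bracket. Your remark on the well-definedness of $r$ is a fair reading of the (slightly loosely stated) definition of $r$ on $U$, and nothing further is needed.
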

  \begin{proof}
  	  The proof is straightforward and omitted.
  \end{proof}
  We have the following results.
  \begin{proposition}\label{513}
      Let $(A, \cdot)$ be a left-symmetric algebra, define a map $\rho: A \to \mathrm{gl}(A)$ by
      \[
      \rho(x)(y) := x \cdot y + 2r(x)y,  \forall\, x, y \in A.
      \]
      Then $(\rho, A)$ is a representation of the multiplicative $\omega$-Lie algebra $(A, [\cdot, \cdot]_A, r)$ defined in Proposition \ref{512}.
  \end{proposition}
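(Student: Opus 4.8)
The plan is to verify directly the defining identity of a representation of an $\omega$-Lie algebra for the multiplicative $\omega$-Lie algebra $(A,[\cdot,\cdot]_A,r)$ of Proposition~\ref{512}: with $\omega(x,y)=r([x,y]_A)$ the bilinear form attached to the multiplicative structure, one must show $\rho([x,y]_A)=\rho(x)\rho(y)-\rho(y)\rho(x)+\omega(x,y)\,\mathrm{id}_A$ for all $x,y\in A$. Linearity of $\rho$ is immediate, so only this identity is at stake.

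The first step is to observe that this $\omega$ is identically zero on $A$: since $[x,y]_A=x\cdot y-y\cdot x$ lies in $[A,A]$ and $r$ vanishes on $[A,A]$ by the construction in Proposition~\ref{512}, we get $r([x,y]_A)=0$ for all $x,y$. Hence the term $\omega(x,y)\,\mathrm{id}_A$ disappears and, inside $\rho([x,y]_A)(z)=[x,y]_A\cdot z+2r([x,y]_A)z$, the scalar contribution $2r([x,y]_A)z$ also vanishes, so the identity to prove reduces to $[x,y]_A\cdot z=\bigl(\rho(x)\rho(y)-\rho(y)\rho(x)\bigr)(z)$ for all $x,y,z\in A$.

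For the second step I would write $\rho(x)=L_x+2r(x)\,\mathrm{id}_A$, where $L_x$ denotes the left multiplication $z\mapsto x\cdot z$. Expanding the two composites $\rho(x)\rho(y)$ and $\rho(y)\rho(x)$, the scalar shifts $2r(x)$ and $2r(y)$ contribute the \emph{same} cross terms to both products, so they cancel in the commutator, leaving $\rho(x)\rho(y)-\rho(y)\rho(x)=L_xL_y-L_yL_x$. The essential input is then the left-symmetry axiom $(x\cdot y)\cdot z-x\cdot(y\cdot z)=(y\cdot x)\cdot z-y\cdot(x\cdot z)$, which rearranges to $L_xL_y-L_yL_x=L_{x\cdot y-y\cdot x}=L_{[x,y]_A}$; that is, left multiplication is the left-regular representation of the sub-adjacent Lie algebra. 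Combining the two preceding identities yields $\rho(x)\rho(y)-\rho(y)\rho(x)=L_{[x,y]_A}$, i.e. the map $z\mapsto[x,y]_A\cdot z$, which is precisely the reduced identity from the first step, so $(\rho,A)$ is a representation of $(A,[\cdot,\cdot]_A,r)$.

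I do not anticipate any real obstacle: the argument is a short computation once the left-symmetry identity is invoked. The only point to watch is the bookkeeping that forces the $2r(\cdot)$-shifts out of the commutator, together with the remark $r|_{[A,A]}=0$, which together kill the would-be $\omega$-term---consistent with the fact that the sub-adjacent bracket of a left-symmetric algebra already satisfies the ordinary Jacobi identity, so the ``$\omega$'' here is genuinely $0$.
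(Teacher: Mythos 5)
Your proof is correct; the paper states this proposition without proof (treating it as a routine verification), and your direct computation is exactly the intended argument: the observation that $r|_{[A,A]}=0$ kills both the $\omega$-term $r([x,y]_A)\,\mathrm{id}_A$ and the scalar part of $\rho([x,y]_A)$, the scalar shifts in $\rho(x)=L_x+2r(x)\,\mathrm{id}_A$ cancel in the commutator, and left-symmetry gives $L_xL_y-L_yL_x=L_{[x,y]_A}$. Nothing is missing.
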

  \begin{proposition}\label{515}
  	  Let $(A, \cdot)$ be a left-symmetric algebra, and let $(\rho, A)$ be the representation defined in Proposition \ref{513} for the multiplicative $\omega$-Lie algebra $(A, [\cdot, \cdot]_A, r)$, where $(A, [\cdot, \cdot]_A, r)$ is defined in Proposition \ref{512}. Then the identity map $\mathrm{id}: A \to A$ is an $\omega$-$\mathcal{O}$-operator associated with the representation $(\rho, A)$.
  \end{proposition}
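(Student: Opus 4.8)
The plan is to verify the defining identity of an $\omega$-$\mathcal{O}$-operator directly, by substituting $T=\mathrm{id}$ and unfolding the representation $\rho$ from Proposition \ref{513}. For $T=\mathrm{id}$, the condition to be checked is
\[
[x,y]_A \;=\; \rho(x)(y)-\rho(y)(x)+2r(y)x-2r(x)y,\qquad \forall\, x,y\in A .
\]
So the entire task is to confirm this single equality; everything else (that $(A,[\cdot,\cdot]_A,r)$ is a multiplicative $\omega$-Lie algebra, that $(\rho,A)$ is a representation of it) is already supplied by Propositions \ref{512} and \ref{513}, so the notion of ``$\omega$-$\mathcal{O}$-operator associated with $(\rho,A)$'' is legitimate and no extra structural work is required.

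First I would expand the right-hand side using $\rho(x)(y)=x\cdot y+2r(x)y$ from Proposition \ref{513}, which gives
\[
\rho(x)(y)-\rho(y)(x) \;=\; (x\cdot y-y\cdot x)+2r(x)y-2r(y)x .
\]
Adding the correction term $2r(y)x-2r(x)y$ then cancels both scalar contributions and leaves exactly $x\cdot y-y\cdot x$, which is $[x,y]_A$ by the definition of the bracket in Proposition \ref{512}. Hence the identity holds for all $x,y\in A$, and $\mathrm{id}:A\to A$ is an $\omega$-$\mathcal{O}$-operator associated with $(\rho,A)$.

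I expect no genuine obstacle here: the statement reduces to a one-line cancellation of scalar terms, and in particular it is insensitive to the choice of complement $U$ of $[A,A]$ and of the constant $c_x$ entering the definition of $r$ in Proposition \ref{512}, since those choices only affect the $r$-dependent summands that cancel. The only point worth stating explicitly in the write-up is that the left-symmetric identity for $(A,\cdot)$ is not even needed for this verification — it is needed earlier, to guarantee that $(A,[\cdot,\cdot]_A,r)$ is a multiplicative $\omega$-Lie algebra and that $(\rho,A)$ is a representation — so the ``hard part'' is purely bookkeeping, which the above computation already settles.
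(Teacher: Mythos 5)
Your verification is correct and is exactly the intended (and essentially only) argument: substituting $T=\mathrm{id}$ and $\rho(x)(y)=x\cdot y+2r(x)y$ into the defining identity of an $\omega$-$\mathcal{O}$-operator makes the scalar terms cancel, leaving $[x,y]_A=x\cdot y-y\cdot x$. The paper omits the proof as immediate, and your one-line cancellation (together with the accurate remark that the left-symmetric identity is only needed upstream in Propositions \ref{512} and \ref{513}) supplies precisely that.
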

  
  At the end of this section, we study the relationship between $\omega$-$\mathcal{O}$-operators associated with representations and the $\omega$-Yang-Baxter equation, and construct solutions of the $\omega$-Yang-Baxter equation based on such $\omega$-$\mathcal{O}$-operators.
  \begin{proposition}
  	  Given a multiplicative $\omega$-Lie algebra $(L, [\cdot, \cdot]_L, r)$ and a representation \( (\rho, V) \) of $L$, we can construct a new multiplicative $\omega$-Lie algebra $L \ltimes_{\rho} V = (L \oplus V, [\cdot, \cdot]_{L \ltimes_{\rho} V}, \overline{r})$ as the semidirect product,, where the bracket $[\cdot, \cdot]_{L \ltimes_{\rho} V}$ and the linear map $\overline{r}$ are defined by
  	  \[
  	  [(x, u), (y, v)]_{L \ltimes_{\rho} V} = ([x, y]_L, \rho(x)(v) - \rho(y)(u)),
  	  \]
  	  \[
  	  \overline{r}(x, u) = r(x),
  	  \]
  	  for all $x, y \in L$ and $u, v \in V$.
  \end{proposition}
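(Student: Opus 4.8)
The plan is to verify directly that $(L\oplus V,\ [\cdot,\cdot]_{L\ltimes_\rho V},\ \overline{r})$ satisfies the axioms of a multiplicative $\omega$-Lie algebra. Anticommutativity is immediate: for $X=(x,u)$ one has $[X,X]_{L\ltimes_\rho V}=([x,x]_L,\ \rho(x)(u)-\rho(x)(u))=(0,0)$ by the anticommutativity of $[\cdot,\cdot]_L$, and bilinearity of the bracket is clear from the formula. Linearity of $\overline{r}$ is obvious, being the composite of the projection $L\oplus V\to L$ with the linear form $r$.

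The real content is the multiplicative $\omega$-Jacobi identity for $X=(x,u)$, $Y=(y,v)$, $Z=(z,w)$. Since $\overline{r}([X,Y]_{L\ltimes_\rho V})=r([x,y]_L)$ and likewise for the other two pairs, the right-hand side of the identity is the element of $L\oplus V$ whose $L$-component is $r([x,y]_L)z+r([y,z]_L)x+r([z,x]_L)y$ and whose $V$-component is $r([x,y]_L)w+r([y,z]_L)u+r([z,x]_L)v$. I would then compute the left-hand side componentwise. The $L$-component of $[[X,Y]_{L\ltimes_\rho V},Z]_{L\ltimes_\rho V}$ together with its two cyclic permutations is exactly $[[x,y]_L,z]_L+[[y,z]_L,x]_L+[[z,x]_L,y]_L$, which by the multiplicative $\omega$-Jacobi identity in $L$ equals $r([x,y]_L)z+r([y,z]_L)x+r([z,x]_L)y$; so the $L$-components agree.

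For the $V$-component, a short expansion shows that $[[X,Y]_{L\ltimes_\rho V},Z]_{L\ltimes_\rho V}$ contributes $\rho([x,y]_L)(w)-\rho(z)\rho(x)(v)+\rho(z)\rho(y)(u)$, and adding the two cyclic permutations yields three terms $\rho([x,y]_L)(w)$, $\rho([y,z]_L)(u)$, $\rho([z,x]_L)(v)$ together with six compositions of the form $\pm\rho(a)\rho(b)(c)$. The key step is to apply the defining relation of a representation of a multiplicative $\omega$-Lie algebra, $\rho([x,y]_L)=\rho(x)\rho(y)-\rho(y)\rho(x)+r([x,y]_L)\mathrm{id}_V$, to each of the three bracket terms; the six resulting double-composition terms cancel in pairs with the six original ones, and the surviving scalar terms are precisely $r([x,y]_L)w+r([y,z]_L)u+r([z,x]_L)v$, matching the $V$-component of the right-hand side. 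This completes the verification, so $(L\oplus V,[\cdot,\cdot]_{L\ltimes_\rho V},\overline{r})$ is a multiplicative $\omega$-Lie algebra, with twisting form $\overline{\omega}(X,Y)=\overline{r}([X,Y]_{L\ltimes_\rho V})$.

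There is no genuine obstacle here; the argument is a finite computation. The only point requiring attention is the bookkeeping in the $V$-component: one must pair each composition $\rho(a)\rho(b)(c)$ with the correct cyclic occurrence of the corresponding $\rho([\cdot,\cdot]_L)$ term so that the cancellation is transparent, keeping track of the cyclic matching of $u,v,w$ against $x,y,z$.
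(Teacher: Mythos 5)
Your proof is correct and is exactly the direct verification the paper has in mind (the paper simply states ``the verification is a routine calculation and is therefore omitted''); your componentwise check of the $\omega$-Jacobi identity, with the cancellation $\rho([x,y]_L)(w)-\rho(x)\rho(y)(w)+\rho(y)\rho(x)(w)=r([x,y]_L)w$ and its cyclic analogues, is the right bookkeeping and closes the argument.
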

  \begin{proof}
  	   The verification is a routine calculation and is therefore omitted.
  \end{proof}
  Given a multiplicative $\omega$-Lie algebra $(L, [\cdot, \cdot]_L, r)$ and a representation \( (\rho, V) \) of $L$, we consider its dual representation \( (\rho^*, V^*) \). Therefore, we obtain the semidirect product $\omega$-Lie algebra $L \ltimes_{\rho^*} V^*$.
  \begin{lemma}
  	 Let $L$ and $V$ be two vector spaces, any linear map $T: V \to L$ can be viewed as an element $\overline{T} \in (L \ltimes_{\rho^*} V^*) \otimes (L \ltimes_{\rho^*} V^*)$ via 
  	 \[
  	 \overline{T}(\xi + u, \eta + v) = \langle T(u), \eta \rangle,  \forall\, \xi + u, \eta + v \in L^* \oplus V.
  	 \]
  \end{lemma}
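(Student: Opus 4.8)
The plan is to treat this as a statement of pure linear algebra; the $\omega$-Lie bracket on $L \ltimes_{\rho^*} V^*$ plays no role whatsoever. Write $W$ for the underlying vector space of $L \ltimes_{\rho^*} V^*$, so $W = L \oplus V^*$ and (working, as throughout, with finite-dimensional spaces) $W^* = L^* \oplus V$, equipped with the block-diagonal pairing $\langle x + \varphi,\ \xi + v\rangle = \langle x, \xi\rangle + \langle \varphi, v\rangle$ for $x \in L$, $\varphi \in V^*$, $\xi \in L^*$, $v \in V$; in particular the cross pairings $\langle x, v\rangle$ and $\langle \varphi, \xi\rangle$ vanish. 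Elements of $W \otimes W$ correspond bijectively to bilinear forms on $W^* \times W^*$ via $a \otimes b \mapsto \big((p,q) \mapsto \langle a, p\rangle\langle b, q\rangle\big)$, so the claim amounts to exhibiting the (necessarily unique) element of $W \otimes W$ whose associated bilinear form is $(\xi + u,\ \eta + v) \mapsto \langle T(u), \eta\rangle$.

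Next I would write down the candidate. Under the canonical isomorphism $\mathrm{Hom}(V, L) \cong V^* \otimes L$, the map $T$ corresponds to an element $\overline{T} \in V^* \otimes L$, which sits inside $W \otimes W$ because $V^* \subseteq W$ and $L \subseteq W$; concretely, choosing a basis $\{v_i\}$ of $V$ with dual basis $\{v_i^*\}$, one has $\overline{T} = \sum_i v_i^* \otimes T(v_i)$, and this is independent of the chosen basis since it is the image of $T$ under the canonical isomorphism.

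Finally I would evaluate the associated bilinear form. For $\xi + u,\ \eta + v \in L^* \oplus V = W^*$, since $v_i^* \in V^*$ pairs trivially with $\xi \in L^*$ and $T(v_i) \in L$ pairs trivially with $v \in V$, the sum collapses to a single surviving term:
\[
\overline{T}(\xi + u,\ \eta + v) = \sum_i \langle v_i^*,\ \xi + u\rangle\,\langle T(v_i),\ \eta + v\rangle = \sum_i v_i^*(u)\,\langle T(v_i),\ \eta\rangle = \Big\langle T\Big(\sum_i v_i^*(u)\, v_i\Big),\ \eta\Big\rangle = \langle T(u),\ \eta\rangle,
\]
using $u = \sum_i v_i^*(u)\, v_i$. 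This is precisely the formula in the statement, and uniqueness of $\overline{T}$ follows from finite-dimensionality of $W$. I do not expect a genuine obstacle here; the only point that needs care is bookkeeping the vanishing of the cross pairings on $L \oplus V^*$, which is exactly why the bilinear form reduces to $\langle T(u), \eta\rangle$ and to nothing else.
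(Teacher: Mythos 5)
Your proof is correct; the paper states this lemma without proof, and your basis-expansion identification of $T$ with an element of $V^*\otimes L\subseteq W\otimes W$ together with the vanishing of the cross pairings on $(L\oplus V^*)\times(L^*\oplus V)$ is exactly the standard argument the paper implicitly relies on. One small remark: in the proof of Theorem \ref{518} the paper writes $\overline{T}=\sum_{i=1}^m T(v_i)\otimes v_i^*$, i.e.\ with the tensor factors in the opposite order to your $\sum_i v_i^*\otimes T(v_i)$; under your (natural) convention $(a\otimes b)(p,q)=\langle a,p\rangle\langle b,q\rangle$ it is your order that reproduces the displayed formula $\overline{T}(\xi+u,\eta+v)=\langle T(u),\eta\rangle$, so this is a discrepancy in the paper's bookkeeping rather than a gap in your argument.
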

  \begin{theorem}\label{518}
  	  Let $(L, [\cdot, \cdot]_L, r)$ be a multiplicative $\omega$-Lie algebra, and \( (\rho, V) \) be a representation of $L$. Let $T: V \to L$ be a linear map. Then $R = \overline{T} - \sigma(\overline{T})$ is a skew-symmetric solution of the $\omega$-Yang-Baxter equation on the multiplicative $\omega$-Lie algebra $L \ltimes_{\rho^*} V^*$ if and only if $T$ is an $\omega$-$\mathcal{O}$-operator associated with the representation \( (\rho, V) \).
  \end{theorem}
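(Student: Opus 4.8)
The argument is the $\omega$-analogue of Bai's correspondence \cite{cite4} between $\mathcal{O}$-operators and skew-symmetric solutions of the classical Yang--Baxter equation: choose a basis, express $R$ and $[R,R]$ in coordinates on $L\ltimes_{\rho^*}V^*$, and check that the vanishing of $[R,R]$ is, tensor slot by tensor slot, the defining identity of an $\omega$-$\mathcal{O}$-operator. Skew-symmetry of $R=\overline{T}-\sigma(\overline{T})$ is automatic, and the $V^*$-components of $R$ visibly meet the structural requirements of Definition~\ref{411} (they lie in $\ker\overline{r}$, and $[V^*,V^*]=0$), so the whole content of the theorem is the equivalence $[R,R]_{L\ltimes_{\rho^*}V^*}=0\iff T$ is an $\omega$-$\mathcal{O}$-operator associated with $(\rho,V)$.

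To make this precise I would fix a basis $\{f_a\}$ of $V$ with dual basis $\{f^a\}$ of $V^*$, so that $\overline{T}=\sum_a f^a\otimes T(f_a)$ lies in $V^*\otimes L\subseteq(L\ltimes_{\rho^*}V^*)^{\otimes 2}$ and $R=\sum_a\bigl(f^a\otimes T(f_a)-T(f_a)\otimes f^a\bigr)$. Because the bracket on the semidirect product satisfies $[L,L]\subseteq L$, $[L,V^*]\subseteq V^*$ (through $\rho^*$) and $[V^*,V^*]=0$, each tensor occurring in the expansion of $[R,R]$ lies in a definite summand of $(L\ltimes_{\rho^*}V^*)^{\otimes 3}=\bigoplus_{A,B,C\in\{L,V^*\}}A\otimes B\otimes C$; the nonzero ones occupy exactly the three summands $L\otimes V^*\otimes V^*$, $V^*\otimes L\otimes V^*$ and $V^*\otimes V^*\otimes L$, while the terms built from the fixed central element $u_r$ sit in disjoint summands and decouple (so one effectively works with $u_r=0$ here). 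Relabelling the summation indices and using skew-symmetry of $[T(f_a),T(f_b)]_L$ then shows the three relevant summands encode equivalent conditions, so it suffices to treat one of them.

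I would then contract the two $V^*$-slots of, say, the $V^*\otimes V^*\otimes L$ component against arbitrary $u,v\in V$. Using $\langle f^a,u\rangle=u^a$, the identity $\sum_b w^b T(f_b)=T(w)$, and — crucially — the formula $\rho^*(x)(\xi)(v)=-\xi(\rho(x)v)+2r(x)\xi(v)$ of Proposition~\ref{21}, the three surviving tensors evaluate to $-T(\rho(T(u))v)+2r(T(u))T(v)$, $[T(u),T(v)]_L$ and $T(\rho(T(v))u)-2r(T(v))T(u)$; their sum vanishes for all $u,v$ precisely when
\[
[T(u),T(v)]_L=T\bigl(\rho(T(u))v-\rho(T(v))u\bigr)+2r(T(v))T(u)-2r(T(u))T(v),
\]
that is, precisely when $T$ is an $\omega$-$\mathcal{O}$-operator associated with $(\rho,V)$; the converse direction is the same computation read backwards. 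Note that the $r$-twists $\pm 2r(T(\cdot))T(\cdot)$ are produced exactly by the $2r(x)\xi$ summand of the dual representation, so in the Lie-algebra limit $r=0$ they disappear and one recovers the classical statement.

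The step I expect to be the main obstacle is the bookkeeping: expanding $[R,R]$ cleanly into its tensor terms, recording which of the eight summands of $(L\ltimes_{\rho^*}V^*)^{\otimes 3}$ each one lies in, and then carrying out the final coordinate contraction carefully enough that the $2r$-twists emerge with the correct signs. The two things that keep this under control are the grading, which collapses the three relevant summands into one equivalent identity, and the observation that the $u_r$-terms are confined to irrelevant summands.
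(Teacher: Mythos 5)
Your proposal is correct and follows essentially the same route as the paper: fix a basis of $V$, write $\overline{T}=\sum_i T(v_i)\otimes v_i^*$, expand the tensor form $[R_{12},R_{13}]+[R_{12},R_{23}]+[R_{13},R_{23}]$ using the semidirect-product bracket, and observe that the $2r$-twists produced by the dual representation $\rho^*(x)(\xi)=-\xi(\rho(x)\cdot)+2r(x)\xi$ assemble each of the three surviving tensor slots into the $\omega$-$\mathcal{O}$-operator identity $\mathrm{OT}(u,v)=0$. Your remarks on the $u_r$-terms and on the structural hypotheses of Definition~\ref{411} are at least as careful as the paper's own treatment, which silently omits both points.
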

  \begin{proof}
  	Let $\{e_1, \dots, e_n\}$ be a basis of $L$. Let $\{v_1, \dots, v_m\}$ be a basis of $V$ and $\{v_1^*, \dots, v_m^*\}$ be its dual basis. We define the element $\overline{T} \in (L \ltimes_{\rho^*} V^*) \otimes (L \ltimes_{\rho^*} V^*)$ by
  	$\overline{T} = \sum\limits_{i=1}^m T(v_i) \otimes v_i^*$. Then we have
  	\[
  	[R_{12}, R_{13}] + [R_{12}, R_{23}] + [R_{13}, R_{23}]
  	\]
  	\[
  	= \sum_{i,k=1}^m [T(v_i), T(v_k)] \otimes v_i^* \otimes v_k^* 
  	- \rho^*(T(v_i)) v_k^* \otimes v_i^* \otimes T(v_k) + \rho^*(T(v_k)) v_i^* \otimes T(v_i) \otimes v_k^* 
  	\]
  	\[
  	- v_i^* \otimes [T(v_i), T(v_k)] \otimes v_k^* - T(v_i) \otimes \rho^*(T(v_k)) v_i^* \otimes v_k^* + v_i^* \otimes \rho^*(T(v_i)) v_k^* \otimes T(v_k)
  	\]
  	\[
  	+ v_i^* \otimes v_k^* \otimes [T(v_i), T(v_k)] + T(v_i) \otimes v_k^* \otimes \rho^*(T(v_k)) v_i^* - v_i^* \otimes T(v_k) \otimes \rho^*(T(v_i)) v_k^*.
  	\]
  	Set
  	\[
  	\mathrm{OT}(u,v) = [T(u), T(v)] + T(\rho(T(v)) u) - T(\rho(T(u)) v) - 2r(T(v)) T(u) + 2r(T(u)) T(v), \forall\, u,v \in V.
  	\]
  	Note that
  	\[
  	- \sum_{i,k} \rho^*(T(v_i)) v_k^* \otimes v_i^* \otimes T(v_k) - \sum_{i,k,j} \langle \rho^*(T(v_i)) v_k^*, v_j \rangle v_j^* \otimes v_i^* \otimes T(v_k)
  	\]
  	\[
  	= - \sum_{i,j,k} \left( -\langle v_k^*, \rho(T(v_i)) v_j \rangle + 2r(T(v_i)) \langle v_k^*, v_j \rangle \right) v_j^* \otimes v_i^* \otimes T(v_k) = \sum_{i,j,k} \langle v_k^*, \rho(T(v_i)) v_j \rangle v_j^* \otimes v_i^* \otimes T(v_k) 
  	\]
  	\[
  	- \sum_{i,j,k} 2r(T(v_i)) \delta_{jk} v_j^* \otimes v_i^* \otimes T(v_k) = \sum_{i,j,k} \langle v_k^*, \rho(T(v_i)) v_j \rangle v_j^* \otimes v_i^* \otimes T(v_k) - \sum_{i,j} 2r(T(v_i)) v_j^* \otimes v_i^* \otimes T(v_j)
  	\]
  	\[
  	= \sum_{i,j,k} \langle v_j^*, \rho(T(v_i)) v_k \rangle v_k^* \otimes v_i^* \otimes T(v_j) - \sum_{i,k} 2r(T(v_i)) v_k^* \otimes v_i^* \otimes T(v_k) = \sum_{i,k} v_k^* \otimes v_i^* \otimes T(\rho(T(v_i)) v_k)
  	\]
  	\[
  	- 2r(T(v_i)) v_k^* \otimes v_i^* \otimes T(v_k).
  	\]
  	Then we get
  	\[
  	[R_{12}, R_{13}] + [R_{12}, R_{23}] + [R_{13}, R_{23}] 
  	\]
  	\[
  	= \sum_{i,k=1}^m \Big( \mathrm{OT}(v_i, v_k) \otimes v_i^* \otimes v_k^* 
  	- v_i^* \otimes \mathrm{OT}(v_i, v_k) \otimes v_k^* 
  	+ v_i^* \otimes v_k^* \otimes \mathrm{OT}(v_i, v_k) \Big).
  	\]
  	Therefore $R = \overline{T} - \sigma(\overline{T})$ is a skew-symmetric solution of the $\omega$-Yang-Baxter equation on the multiplicative $\omega$-Lie algebra $L \ltimes_{\rho^*} V^*$ if and only if $T$ is an $\omega$-$\mathcal{O}$-operator associated with the representation \( (\rho, V) \). 	  
  \end{proof}

\end{document}